\documentclass[reqno]{amsart}
\usepackage{amsmath}
\usepackage{mathtools}
\usepackage{amsthm}
\usepackage{amssymb}
\usepackage{epsfig}
\usepackage{psfrag}
\usepackage{graphicx}
\usepackage{pgf}
\newcommand{\disp}{\displaystyle}
\newcommand{\Rz}{\mathbb{R}}
\newcommand{\Qz}{\mathbb{Q}}

\newcommand{\Nz}{\mathbb{N}}
\newcommand{\epsi}{\varepsilon}

\newcommand{\dr}{{\rm d} r} 
 
\newcommand{\ove}{\overline}
\newcommand{\haz}{\widehat}
\DeclareMathOperator*{\argmin}{{\rm arg\,min}}

\renewcommand{\sl}{|\partial \phi|}
\newcommand{\gl}{\mathfrak{l}_\phi}
\newcommand{\md}{|u'|}

\newcommand{\lan}{\left\langle}
\newcommand{\ran}{\right\rangle}

\newcommand{\GG}{{\mathcal G}}
\newcommand{\EE}{{\mathcal E}}
\newcommand{\FF}{{\mathcal F}}
\newcommand{\HH}{{\mathcal H}}
\renewcommand{\d}{{\rm d}}

\newtheorem{theorem}{Theorem}[section]
\newtheorem{remark}[theorem]{Remark}
\newtheorem{corollary}[theorem]{Corollary}
\newtheorem{definition}[theorem]{Definition}
\newtheorem{proposition}[theorem]{Proposition}
\newtheorem{lemma}[theorem]{Lemma}

\begin{document}

\title[Brezis--Ekeland--Nayroles principle in metric spaces]{The Brezis--Ekeland--Nayroles principle in metric spaces:
time-continuous setting}
 
\author[P.-C. Aubin--Frankowski]{Pierre--Cyril Aubin--Frankowski}
\address[P.-C. Aubin--Frankowski]{CERMICS, CNRS, ENPC, Institut Polytechnique de Paris, 6 and 8 avenue Blaise-Pascal, Cit\'e Descartes,
Champs-sur-Marne, F-77455, Marne-la-Vall\'ee cedex 2, France.}
\email{pierre-cyril.aubin@enpc.fr}
\urladdr{https://pcaubin.github.io/}

\author[G.~E. Sodini]{Giacomo Enrico Sodini}
\address[G.~E. Sodini]{Institute of Analysis and Scientific Computing, TU Wien,
  Wiedner Hauptstra\ss e 8-10, A-1040, Vienna, Austria.}
\email{giacomo.sodini@tuwien.ac.at}
\urladdr{https://giacomosodini.github.io/}

\author[U. Stefanelli]{Ulisse Stefanelli}
\address[U. Stefanelli]{Faculty of Mathematics, University of Vienna, Oskar-Morgenstern-Platz 1, A-1090 Vienna, Austria.}
\email{ulisse.stefanelli@univie.ac.at}
\urladdr{http://www.mat.univie.ac.at/$\sim$stefanelli}

 \subjclass[2010]{35K55}
 \keywords{Curves of maximal slope $\cdot$ Minimizing movements $\cdot$
  Brezis--Ekeland--Nayroles variational principle}

 \begin{abstract}
  Based on a variational characterization of the local
  slope of a functional, we extend the celebrated
  Brezis--Ekeland--Nayroles null-minimization principle to curves of
  maximal slope  in metric
  spaces. In the time-continuous setting, we check that the ensuing
  global-in-time functionals admit minimizers.
 \end{abstract}

 \maketitle

 \section{Introduction}

 \setcounter{equation}{0}

The Brezis--Ekeland--Nayroles principle \cite{Brezis-Ekeland76b,Brezis-Ekeland76, Nayroles76, Nayroles76b}  is a variational
approach to dissipative evolution systems, characterizing solutions as null-minimizers of suitable
nonnegative functionals on trajectories. To introduce the principle, consider for simplicity a doubly nonlinear flow
\begin{equation} \partial \hat \psi( u' ) + \partial \phi(u) = 0 \quad \text{a.e. in} \
[0,T]\label{eq:flow}
\end{equation}
where $u:[0,T]\to V$ is a trajectory in a reflexive Banach space $V$,
the prime denotes  differentiation,  and $\phi, \,\hat \psi: V \to
[0,\infty)$ are convex functionals with single-valued subdifferentials
$\partial \phi, \, \partial \hat \psi:V \to V^*$ (dual).   By introducing a second
trajectory $a:[0,T] \to V^*$, equation
\eqref{eq:flow} can be equivalently written as the system  $-a =
\partial \hat\psi(u')$ and $a =\partial
\phi(u) $ almost everywhere in $[0,T]$.

One can then characterize $\partial \phi: V \to
V^*$ of $\phi$ at a point $u $ with
                                $\phi(u)<\infty$ as
$a \in \partial \phi(u) $ iff $-\lan a , u
\ran + \phi(u) + \phi^*(a)\leq 0$
where $ \phi^*(a) = \sup_{v\in V}\lan a, v \ran -\phi(v)$
is the classical {\it Fenchel conjugate} and $\langle\cdot,\cdot\rangle$ is the duality pairing between
$V^*$ and $V$. In the Banach case one hence
has
\begin{equation}\label{eq:Banach_Fenchel_charac}
    a \in \partial \phi(u)  \ \Leftrightarrow \ -\lan a , u-v
\ran + \phi(u) -\phi(v)\leq 0 \quad \forall v \in V.
\end{equation}
Using this characterization, the Brezis--Ekeland-Nayroles principle for doubly nonlinear flows
\cite[Thm.~1.2]{be} asserts that solutions $u$ to \eqref{eq:flow} correspond
to  null-minimizers  of
\begin{align*}
    \HH_{\rm Banach}(a,u) &=
       \left(\phi(u(T))-\phi(u(0)) +\int_0^T \hat \psi( u') \, \dr +
                             \int_0^T  \hat \psi^*(a)\, \dr \right)^+ \\
  &\quad  +  \int_0^T \sup_{v\in V}\big( - 
                            \langle a,u-v \rangle +
    \phi(u) -\phi(v)\big)\, \dr, 
\end{align*}
denoting $x^+ =x\vee 0$. The  second term in $\HH_{\rm Banach}$ is $0$ iff $a =\partial
\phi(u)$ and, in this case, the first term is $0$ iff   $-a =
\partial \hat\psi(u')$.

We are interested in extending the Brezis--Ekeland--Nayroles
principle to evolution equations in metric spaces. Originating from the pioneering
 contributions \cite{JKO,Otto01},  flows in metric spaces have
 specifically received
 a strong development, motivated by their wide range of possible
 applications. The fundamental reference in the field is the monograph
 by {\sc Ambrosio, Gigli, \& Savar\'e} \cite{Ambrosio08}. In a metric space, however, no
notion of derivative  or duality  is available
and one has to
resort to {\it scalar} fields: the {\it metric
  derivative} $|u'|$ of $u$ and $u \mapsto \sl(u)$ the {\it local slope} of $\phi$.

 A  prominent  notion of dissipative evolution in the complete metric
 space $(U,d)$ is that of {\it curve of maximal slope}. This is
  a  trajectory $u:[0,T]\to U$ solving 
\begin{equation}
  \label{eq:intro_EDE}
  \phi(u(t)) - \phi(u(0)) + \int_0^t\psi\big(|u'|(r)\big) \, \dr +
  \int_0^t\psi^*\big(\sl(u(r))\big)\, \dr=0 \quad \forall t
  \in [0,T]
\end{equation}
where the functional $\phi:U \to [0,\infty]$ and the strictly convex,
smooth  function \linebreak $\psi:[0,\infty) \to [0,\infty)$ with Fenchel conjugate $\psi^*$ are
given.
 
In the
classical {\it geodesically-convex} case  (a setting which we will nonetheless
soon abandon)
  our extension of the classical Brezis--Ekeland--Nayroles principle  is 
based on the global variational characterization of the local slope (which in this case coincides with the global slope) given by
\begin{equation} \alpha = \sl(u) \ \Leftrightarrow \  \alpha =\argmin \big\{\ove
  \alpha \geq 0 :  -\ove \alpha d(u,v){+}\phi(u){-}\phi(v) \leq 0\  \forall v \in
  U\big\},\label{eq:introchar}
\end{equation}
see Proposition
\ref{prop:char} below.
By letting $$ E(\alpha,u) = \max_{v\in U} \big\{- \alpha
d(u,v)+\phi(u)-\phi(v) \big\}$$ (see Lemma \ref{lemma:v} below),  we
define the analogue functional $\HH$ to $ \HH_{\rm Banach}$ over entire trajectories $u:[0,T]\to
U$ and $\alpha: [0,T]\to [0,\infty)$ as
\begin{align*}
    \HH(\alpha,u) &=
       \left(\phi(u(T))-\phi(u(0)) +\int_0^T \psi\big( |u'|^p(r)\big)\,\dr + \int_0^T  \psi^*\big(\alpha(r)\big)\,\dr\right)^+ \\
  &\quad  +  \int_0^T E
                            (\alpha(r),u(r))\, \dr.
\end{align*}
The {\it metric Brezis--Ekeland--Nayroles principle}
ensues by checking in Theorem \ref{prop:BEN} that 
\begin{equation}
  \boxed{\text{$u$ is a curve of maximal slope} \ \Leftrightarrow  \HH(\alpha ,u)
  = \min\HH=0}
  \label{eq:princi}
  \end{equation}
 for some $\alpha$. In this case, $\alpha =\sl(u)$ almost
  everywhere in $[0,T]$. This provides a variational characterization
  of curves of maximal slope, opening the way to applying the methods
  of the calculus of variations  to the evolution problem.  In particular,
the vectorial variable $a(t)\in V^*$ in $\HH_{\rm Banach}$ is replaced by the scalar one
$\alpha(t)\geq 0$ in $\HH$ and the duality $ -\langle a,u-v
\rangle$ is replaced by  $-\alpha d(u,v)$. As  an  effect of this generalization, the
analysis of $\HH$ is new even in the Banach space case.
  
  In
  this paper, we  analyze the above approach in the time-continuous
  setting.   After checking 
  characterization \eqref{eq:princi} (Theorem \ref{prop:BEN}),
we prove that $\HH$ is
  lower semicontinuous (Proposition \ref{prop:FFlsc}) and that it
  admits minimizers (Proposition \ref{prop:min}).  In this paper, we do not explicitly prove that these minimizers are in fact null-minimizers, i.e., $\min_{K(u^0)}\HH=0$. This is however true, as curves of maximal slope do exist, see also \cite{cuore3}.

Besides $\HH$, other functionals may be used to obtain alternative metric
Brezis--Ekeland--Nayroles principles. In this paper, we also
investigate  the
functional  $\GG$ given by 
\begin{align*}
     \GG(\alpha,u) &=
     \left\{
     \begin{array}{l}
      \phi(u(T))-\phi(u(0)) + \int_0^T 
       \psi\big(|u'|(r)\big) \dr +  \int_0^T  \psi^*\big(\alpha
       (r)\big)\, \dr\\[1mm]
       \qquad \text{if} \ \disp \int_0^T E
                            (\alpha(r),u(r))\, \dr=0,\\[3mm]
       \infty\quad \text{otherwise},
     \end{array}
  \right.
\end{align*}
and prove that the characterization \eqref{eq:princi} holds for $\GG$
 as  well (Theorem \ref{prop:BEN}), $\GG$ is lower semicontinuous (Proposition \ref{prop:FFlsc}), and that it
  admits minimizers (Proposition \ref{prop:min}). Moreover, we also
  consider a penalized version of $\GG$. For all $\delta >0$, we let
  the approximating functional $\GG_\delta$ be given by 
  \begin{align*}
    \GG_\delta(\alpha,u) &=\phi(u(T))-\phi(u(0)) + \int_0^T 
       \psi\big(|u'|(r)\big) \dr +  \int_0^T  \psi^*\big(\alpha
       (r)\big)\, \dr \\
                                             &\quad +\frac{1}{\delta}\int_0^T E
                            (\alpha(r),u(r))\, \dr,
                                \end{align*}
                                 we
                                show that minimizers of $\GG_\delta$ converge as $\delta \to 0$ to
a curve of maximal slope up to subsequences (Proposition \ref{prop:pen2}).\\ 
 
\textbf{Related works.} Originally introduced in the linear parabolic
setting, the Brezis--Ekeland--Nayroles approach has been extended to cover gradient flows
\cite{Ghoussoub-Tzou04} and their quadratic perturbations
\cite{Ghoussoub-McCann04}, long-time dynamics \cite{Lemaire96},
compactness \cite{Visintin22}, and
time discretizations \cite{be2}. Extensions to general
monotone and semimonotone flows  \cite{Visintin13,Visintin16},
doubly nonlinear flows  \cite{be,Visintin11, Visintin21}, rate-independent flows
\cite{Visintin13b}, Hamiltonian flows \cite{Buliga, Ghoussoub, Ghoussoub2,
  Ghoussoub3},  and second-order in time problems  \cite{Mabrouk01,Mabrouk03}  are
also available. A far-reaching recast of the
Brezis--Ekeland--Nayroles principle in terms  of   (anti-)selfdual
Lagrangians can be found in \cite{Ghoussoub08}. Applications have been developed to stochastic PDEs
\cite{Barbu,Boroushaki}, optimal control \cite{Barbu12,portinale}, nonlinear
diffusion \cite{Marinoschi,Poliakovsky}, elastoplasticity
\cite{Cao,plas}, conservation laws \cite{Poliakovsky2}, identification
\cite{Barbu-Kunisch95}, and deep learning
\cite{Carini}, among many others.

Beside  the variants of the Brezis--Ekeland--Nayroles
principle mentioned above, global-in-time variational
approaches to dissipative evolution problems have  a long
tradition, dating back at least to Biot's work on irreversible
thermodynamics \cite{Biot55} and Gurtin's principle for
viscoelasticity and elastodynamics
\cite{Gurtin63,Gurtin64b}, see also \cite{Hlavacek69}. In \cite{Visintin01}, solutions to
doubly nonlinear problems are obtained as minimal elements of a
certain partial-order relation on the trajectories. Related to the
specific form  of  the first term in $\HH$, one should also mention the
{\it Energy-Dissipation Principle}, which has attracted
attention  as  an effective tool to identify limits and check
stability, see
\cite{Dondl19,Frenzel,Mielke20,Mielke21,Sandier04,Serfaty11} among
many others.
An alternative global variational approach is the so-called {\it
  Weighted Inertia-Dissipation-Energy} principle, where
solutions are obtained as limits of subsequences of minimizers of
parametrized functionals over trajectories
\cite{Stefanelli25}.
In the setting of gradient flows, the WIDE principle has already been extended to the case of metric spaces in \cite{Rossi11,Rossi19}.\\

 In the second
paper  in  this series  
  \cite{cuore3}, we focus on {\it minimizing-movements}, a reference method to construct curves of maximal slope through 
time discretization \cite{Ambrosio95}. Therein we introduce new schemes  that arise as    {\it variational integrators}
 of time-continuous functionals $\HH$, $\GG$, and $\GG_\delta$. 
  These schemes
 are  shown  to  admit  solutions, whose
time-interpolants converge up to subsequences  to
curves of maximal slope. General conditional convergence results are
also deduced, applying in particular to the case of incremental
quasiminimizers.  In fact, these schemes may serve as a-posteriori convergence  tests, independently  of 
the method used to produce the discrete solutions.  

 The paper is organized as follows. We specify our
  assumptions in Section \ref{sec:setting}. The variational
  characterization of the local slope \eqref{eq:introchar} is then
  discussed in Section \ref{sec:var}. We present the metric
  Brezis--Ekeland--Nayroles principle in Section
  \ref{sec:BEN} and prove the existence of minimizers in Section
  \ref{sec:minimization}.  Finally,    in Section
  \ref{sec:beyond}  we present a further
  generalization of the metric Brezis--Ekeland--Nayroles principle
  under even weaker assumptions on the driving functional $\phi$.

 \section{Setting}\label{sec:setting}
In this section, we collect assumptions \eqref{eq:A1}-\eqref{eq:A4} and fix  the  notation.  To
start with, we assume that 
 \begin{align}
   &(U,d) \ \text{is a complete, separable, path-connected metric
     space, and} \  T \in (0,\infty). \label{eq:A1}
 \end{align}
 The driving functional  $\phi$  is  requested  to be 
  \begin{align}
   &\quad\phi: U \to (-\infty,\infty] \  \text{proper and with compact
     sublevels.}\label{eq:A2}
  \end{align}
A consequence of the compactness of the sublevels of $\phi$ is
 that $\phi$ is lower semicontinuous and bounded from below.
In particular, \eqref{eq:A2} implies that the  {\it effective domain} $D(\phi)\coloneqq\{u \in U :\phi(u)
<\infty \}$ is nonempty and that $\phi$ admits a global minimizer. With
                                no loss of generality, we henceforth 
                                assume that
\begin{equation}\label{eq:A_min0}
    \min \phi=0.
\end{equation}
For given $p \in (1,\infty)$,   following  the notation in \cite{Ambrosio08}, the set $AC^p([0,T];U)$
indicates {\it $p$-absolutely continuous} curves, namely, trajectories 
$u: [0,T]\to U$ such that there exists $\eta\in
L^p(0,T)$ with 
\begin{equation*}
d(u(s),u(t))\leq \int_s^t \eta(r)\,\dr \quad\forall 0 \leq
s\leq t < T.
\end{equation*}
If $u
\in AC^p([0,T];U) $, the limit
$$|u'|(t) \coloneqq \lim_{s\to t}\frac{d(u(s),u(t))}{|t-s|}$$
exists for almost  all  $t\in (0,T)$, see
\cite[Thm. 1.1.2, p.~24]{Ambrosio08}, and is called  the  {\it metric
derivative} of $u$ at $t$. In this case, the map
 $t \mapsto|u'|(t)$  is in $ L^p(0,T) $
and coincides with the minimal function $\eta\in L^p(0,T)$ satisfying the above bound.

The {\it local slope} $\sl(u)$ and the {\it global slope} $\gl(u)$ \cite{Ambrosio08,Cheeger99,DeGiorgi80} of $ \phi
$ at $ u \in U$ are defined via
$$
\sl(u) \coloneqq \limsup_{v \to u} \frac{(\phi(u) -
  \phi(v))^+}{d(u,v)}, \quad \gl(u) \coloneqq \sup_{v \not =u}\frac{(\phi(u) -
  \phi(v))^+}{d(u,v)},
$$
respectively, and we indicate the corresponding domains as  
$D(\sl) \coloneqq \{u \in D(\phi): \sl(u)<\infty\}$ and $D(\gl)  \coloneqq \{u \in D(\phi): \gl(u)<\infty\}$. Note that the definition
of $\sl(u)$ implicitly requires that $u$ is not isolated   which actually cannot occur as $U$ is  
path-connected by \eqref{eq:A2}.

 Being defined via a supremum, 
the global slope $\gl$ is lower semicontinuous.  On the other hand, the local slope $\sl$  is defined via a $\limsup$ and is not, in general, lower semicontinuous. In order to  approximate $\sl$, we  introduce  a penalized formulation of $\gl$ through a $\mu \geq 0$. For some given $q>1$ and for all $\mu \geq 0$, we define the {\it $\mu$-slope} $\sl_\mu(u)$ of $\phi$ at $u$ as 
\begin{equation}\label{eq:slmu}
\sl_\mu(u) \coloneqq \sup_{v\not = u}\left(\frac{(\phi(u) -
    \phi(v))^+}{d(u,v)} - \mu d^{q-1}(u,v) \right).  
\end{equation} 
Note that $\sl_\mu$ is lower semicontinuous \cite[Prop. 2.7]{rsss} and  that
$\sl_0 = \gl$.

We say that a function $g: U \to [0,\infty]$ is a {\it strong upper gradient} for $\phi$
\cite[Def.~1.2.1, p.~27]{Ambrosio08} if for all $u\in AC^p([0,T];U)$,
the map
$t \mapsto g(u(t))$ is Borel and 
$$|\phi(u(t)) - \phi(u(s))| \leq \int_s^t g (u(r))\,\md(r) \, \dr
\quad \forall 0\leq s \leq t \leq T.$$
If $t \mapsto g (u(t))\md(t) \in L^1(0,T) $ the
latter entails that $\phi \circ u  \in W^{1,1}(0,T) $ and $(\phi
\circ u)' \le g (u) \md$ almost everywhere in $[0,T]$.  Note that
 $\sl_\mu$  is a strong upper gradient for all $\mu \in [0,\infty)$ based on  \cite[proof of Corollary 2.4.10]{Ambrosio08}. In
particular, 
$\gl$ is a strong upper gradient.

In what follows, we will assume that
\begin{align}
    \exists
  \mu_0 \in [0,\infty) \ \text{such that} \ \sl=\sl_{\mu_0}. \label{eq:A20}
\end{align}
 The existence of such $\mu_0$ entails in particular that $\sl$
is lower semicontinuous and a strong upper gradient for $\phi$. Assumption
\eqref{eq:A20} in particular holds if $\phi$ is {\it
  $(\lambda,q)$-generalized-geodesically convex} for some $\lambda \in
\Rz$  \cite{rsss}  (see also \cite{Ohta-Zhao,Shimoyama} for
alternative but comparable definitions).  In this case, one can choose   $\mu_0 = \lambda^-/q$, where
$\lambda = -\lambda \vee 0$. In Section \ref{sec:beyond}, we drop this assumption \eqref{eq:A20}. 

The initial value $u^0$ is asked to fulfill
\begin{equation}
  u^0\in D(\phi).\label{eq:A3}  
\end{equation}

The dissipative character of the evolution is quantified via  the function $\psi:\Rz \to [0,\infty)$ satisfying 
\begin{align}
  &\psi\in C^1(\Rz) \ \text{convex  with}\nonumber\\[2mm]
  &\text{$\psi(0) = \min\psi =0$ such that there exist} \  p\in(1,\infty)  \ \text{and} \ c_\psi\in(0,\infty) \
    \text{with}  \nonumber\\
  &\psi(t)+\psi^*(t^*) \geq c_\psi |t|^p+c_\psi |t^*|^{p'}-\frac{1}{c_\psi}\quad \forall t,\, t_*
 \in \Rz
  \label{eq:A4}
\end{align}
Here, $1/p+1/p'=1$ and $\psi^*$ is the classical Fenchel conjugate $\psi^*(t^*)= \sup_t(
t^*t-\psi(t))$ for $t^*\in \Rz$.  The reference example is
$\psi(t) =|t|^p/p$.  By duality one has that 
\begin{equation} \psi(t)+\psi^*( t^*) \leq C_\psi(1+ |t|^p+|t^*|^{p'})\quad \forall t,\, t_*
\in \Rz\label{eq:upperbound0}
\end{equation}
for some $C_\psi\in(0,\infty)$ depending on $c_\psi$ and $p$.  Indeed, by \eqref{eq:A4} evaluated first at $t^*=0$ and then at $t=0$, we have
\begin{align*}
    \psi(t) \ge c_\psi |t|^p - \frac{1}{c_\psi}, \quad 
    \psi^*(t^*) \ge c_\psi |t^*|^{p'} - \frac{1}{c_\psi} \quad \forall t,t^* \in \Rz,
\end{align*}
so that
\begin{align*}
    \psi(t) &= \psi^{**}(t) \le \sup_{t^*} \{tt^* -c_\psi |t^*|^{p'}\} + \frac{1}{c_\psi} = \frac{1}{p(c_\psi p')^{p-1}} |t|^p + \frac{1}{c_\psi} \quad \forall t \in \Rz,
    \\
    \psi^*(t^*) & \le \sup_{ t } \{tt^* -c_\psi |t|^{p}\} + \frac{1}{c_\psi} = \frac{1}{p'(c_\psi p)^{p'-1}} |t|^{p'} + \frac{1}{c_\psi} \quad \forall t^* \in \Rz,
\end{align*}
which give \eqref{eq:upperbound0} with $C_\psi\coloneqq \max\{ 2c_\psi^{-1}, (p'(c_\psi p)^{p'-1})^{-1}, (p(c_\psi p')^{p-1})^{-1}\}$.  Note that  the
convexity of $\psi$ and the fact that $0=\psi(0)=\min \psi$ imply that
$\psi$ is nondecreasing on $[0,\infty)$. On the other hand, the
differentiability of $\psi$ ensures that $\psi^*$ is strictly convex,
which, together with $0=\psi^*(0)=\min\psi^*$, gives that $\psi^*$ is
strictly increasing on $[0,\infty)$.

 Assumptions \eqref{eq:A1}--\eqref{eq:A2} and
\eqref{eq:A20}--\eqref{eq:A4} are tacitly assumed  throughout   the paper,
with the exception of Section \ref{sec:beyond} where only the assumption \eqref{eq:A20} is dropped.

In this paper, we focus on the following classical notion  \cite{Muratori-Savare} in studying the evolution of gradient flows. 

\begin{definition}[Curve of maximal slope]\label{def:curve}
  The trajectory $u\in AC^p([0,T];U)$ is said to be a \emph{curve of
    maximal slope}
  if, $u(0)=u^0$, $\phi\circ u \in W^{1,1}(0,T)$, and 
  \begin{equation}
    \label{eq:curve}
    \phi(u(t)) + \int_0^t \psi(|u'|(r)) \, \dr  +   \int_0^t
    \psi^*(\sl(u(r)))\, \dr =
    \phi(u(0)) \quad \forall t \in [0,T].
  \end{equation}
\end{definition}

The latter is a slight generalization of the concept of {\it $(p,q)$-curves
of maximal slope} from \cite[Def.~1.3.2, p.~32]{Ambrosio08},
which indeed correspond to the choice $\psi(t)=|t|^p/p$.

To conclude this preliminary section,  let us collect here some compactness tools,
which are used in the following.

\begin{lemma}[Compactness tools] \label{lemma:compactness} Let $ (u_j)_j \subset  AC^p([0,T];U)$ have
  $|u_j'|\to \eta$ weakly in $L^p(0,T)$.  
  \begin{itemize}
  \item[\rm (a)] \ In case that $$\sup_j \sup_{t \in [0,T]} \phi(u_j(t))<\infty,$$
    there  exist $u \in AC^p([0,T];U)$  with $|u'| \leq \eta$ almost everywhere in
    $[0,T]$  and  a not relabeled sequence such that $u_j(t)\to u(t)$
    for all $t\in [0,T]$.
     \item[\rm (b)] \ In case that, for some $t_0 \in [0,T]$,
       $(u_j(t_0))_j$ is  precompact  and  $$\sup_j\int_0^T \phi(u_j(t))\, \d t<\infty,$$
     there  exist $u \in AC^p([0,T];U)$  with $|u'| \leq \eta$ almost everywhere in
    $[0,T]$   and  a not relabeled sequence such that $u_j(t)\to u(t)$
    for almost all $t\in [0,T]$  and $u_j(t_0) \to u(t_0)$.
  \end{itemize}
\end{lemma}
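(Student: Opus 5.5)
Both parts rest on a refined Arzel\`a--Ascoli argument, \cite[Prop.~3.3.1]{Ambrosio08}, applied to the sequence $(u_j)_j$. The weak convergence $|u_j'|\to\eta$ in $L^p(0,T)$ gives equicontinuity in a uniform sense. Indeed, for $s\le t$ we have $d(u_j(s),u_j(t))\le\int_s^t|u_j'|\,\dr$, and the right-hand side converges to $\int_s^t\eta\,\dr$ for every fixed $s,t$. It is also bounded by $\|u_j'\|_{L^p}(t-s)^{1/p'}$, uniformly in $j$. Hence $\limsup_j d(u_j(s),u_j(t))\le \omega(s,t)\coloneqq\int_s^t\eta\,\dr$, with $\omega$ continuous and vanishing on the diagonal. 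This is exactly the hypothesis of the refined Ascoli theorem, once pointwise relative compactness is available. I use the strengthened pseudo-Lipschitz form $\limsup_j d(u_j(s),u_j(t))\le\int_s^t\eta$, not only a modulus of continuity, because it is what yields the bound $|u'|\le\eta$.

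For (a), pointwise relative compactness is immediate. All $u_j(t)$ lie in the sublevel $\{\phi\le C\}$, with $C\coloneqq\sup_j\sup_t\phi(u_j(t))$, and this sublevel is compact by \eqref{eq:A2}. The refined Ascoli theorem then gives a subsequence and a limit $u$ with $u_j(t)\to u(t)$ for every $t\in[0,T]$. Concretely, I would take a diagonal subsequence converging on $\Qz\cap[0,T]$ and extend by equicontinuity. Passing to the limit in $d(u_j(s),u_j(t))$ yields $d(u(s),u(t))\le\int_s^t\eta\,\dr$. Since $\eta\in L^p$, this shows $u\in AC^p([0,T];U)$, and by the minimality of the metric derivative also $|u'|\le\eta$ a.e.

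For (b), the main obstacle is that $\phi(u_j(t))$ is controlled only in $L^1$, so compactness is not available at every time. By Fatou's lemma, $\int_0^T\liminf_j\phi(u_j(t))\,\d t\le\sup_j\int_0^T\phi(u_j)<\infty$. Hence $\liminf_j\phi(u_j(t))<\infty$ for a.e.\ $t$, but the liminf is along a $t$-dependent subsequence. A diagonal extraction on a countable dense set therefore does not work directly. I would argue via the dissipation-based Ascoli variant of \cite[Prop.~3.3.1]{Ambrosio08}, or more concretely as follows.
\begin{itemize}
\item[(i)] Fix $t\in[0,T]$. I claim $(u_j(t))_j$ is precompact. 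Given any subsequence, Fatou applied to it shows that there is $s$ arbitrarily close to $t$ with $\liminf\phi(u_{j}(s))<\infty$. Along a further subsequence, $u_j(s)$ stays in a compact sublevel. The bound $d(u_j(s),u_j(t))\le\|u_j'\|_{L^p}|t-s|^{1/p'}$ is uniform in $j$, so $(u_j(t))_j$ is totally bounded up to an $\epsi$ that vanishes as $s\to t$. Precompactness of the whole sequence then follows from a standard argument: a sequence that is within $\epsi$ of a precompact set for every $\epsi>0$ is precompact.
\item[(ii)] The point $t_0$ is handled by its own hypothesis. Once every $(u_j(t))_j$ is precompact, the refined Ascoli argument of (a) gives convergence for all $t$ along a subsequence, which covers both conclusions: a.e.\ convergence and $u_j(t_0)\to u(t_0)$.
\end{itemize}
The AC bound and $|u'|\le\eta$ follow exactly as in (a). Step (i) is delicate: total boundedness must be made uniform along the whole sequence rather than along subsequences. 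If that fails, I would instead use the compactness of the functional $u\mapsto\int_0^T\phi(u)$ in $L^0$-convergence, in the sense of Rossi--Savar\'e, together with the Ascoli estimate, and settle for a.e.\ convergence, which suffices for (b).
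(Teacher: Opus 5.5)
Your part (a) matches the paper's argument: \cite[Prop.~3.3.1]{Ambrosio08}, with pointwise compactness coming from the sublevels of $\phi$. For (b) you take a genuinely different route.

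\textbf{How the routes differ.} The paper cites the metric Aubin--Lions result \cite[Thm.~2]{Rossi-Savare}, which only yields convergence almost everywhere. It then adds a separate step at $t_0$: extract $u_j(t_0)\to\ove u$ using the precompactness hypothesis, and identify $\ove u=u(t_0)$ by the triangle inequality through nearby convergence times and the continuity of $u$.

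You instead use the uniform H\"older equicontinuity $d(u_j(s),u_j(t))\le M|t-s|^{1/p'}$, where $M=\sup_j\||u_j'|\|_{L^p}<\infty$. This turns the integral bound on $\phi(u_j)$ into precompactness of $(u_j(t))_j$ at every $t$, after which the Ascoli argument of (a) applies. The argument works and gives more: convergence at every $t\in[0,T]$. In particular your step (i) already covers $t_0$, so the precompactness hypothesis there is redundant; it is not "handled by its own hypothesis". The paper's route rests on a theorem built for mere integral equicontinuity, where everywhere convergence can fail, which is why it needs the extra step at $t_0$. Yours is self-contained and exploits the stronger time regularity actually available here.

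\textbf{Closing step (i).} The loose end you flag can be closed uniformly in $j$, with no subsequences. Set $C=\sup_j\int_0^T\phi(u_j)$ and let $I\subset[0,T]$ be an interval of length $h$ containing $t$. Chebyshev's inequality (using $\phi\ge 0$) gives, for every $j$, some $s_j\in I$ with $\phi(u_j(s_j))\le 2C/h$. Hence every $u_j(t)$ lies in the $Mh^{1/p'}$-neighbourhood of the single compact set $\{\phi\le 2C/h\}$. Letting $h\to0$ shows $\{u_j(t)\}_j$ is totally bounded, hence precompact since $U$ is complete. Your subsequence-based version also works, via a diagonal extraction that produces a Cauchy subsequence. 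Once precompactness holds on a countable dense set, uniform equicontinuity and completeness give convergence at every $t$.

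\textbf{The fallback.} Your fallback is not sufficient as stated. Almost-everywhere convergence alone does not give $u_j(t_0)\to u(t_0)$, which (b) explicitly requires. Along that route you would need exactly the paper's additional argument at $t_0$.
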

 The results (a) and (b) above correspond to
  the Ascoli--Arzel\`a theorem  and  a metric version of the Aubin--Lions lemma,   and are proved in
\cite[Prop. 3.3.1]{Ambrosio08} and \cite[Thm.~2]{Rossi-Savare}. 
The only difference between the result (b) above and the one presented
in \cite[Thm.~2]{Rossi-Savare} is the convergence at the additional
point $t_0$, which we briefly comment here. Since $(u_j(t_0))_j$ is
assumed to be  precompact,  we can first extract a not
relabeled subsequence such that $u_j(t_0) \to  \ove u \in U$. 
On the other hand, the result by \cite[Thm.~2]{Rossi-Savare} gives the
existence of $u \in AC^p([0,T];U)$   with $|u'| \leq \eta$ almost
everywhere in $[0,T]$  and  of  a not relabeled sequence such that $u_j(t)\to u(t)$ for almost all $t\in [0,T]$. In particular, we have
\begin{align*}
d(u_j(t_0), u(t_0)) &\le d(u_j(t_0), u_j(t)) + d(u_j(t), u(t))+d(u(t), u(t_0)),
\\
& \le \int_{t_0 \wedge t}^{t_0 \vee t} |u'_j|(r) \d r + d(u_j(t), u(t)) + d(u(t), u(t_0))
\end{align*}
for every $t \in [0,T]$ such that $u_j(t) \to u(t)$. Passing to the limit as $j \to \infty$, we get
\[
d( \ove u  , u(t_0))  \leq   \int_{t_0 \wedge t}^{t_0 \vee t} \eta(r) \d r + d(u(t), u(t_0))
\]
and passing to the limit as $t \to t_0$, we obtain $u(t_0)= \ove u$.

\section{Variational characterization of $\sl(u)$}\label{sec:var}

This section pertains to some introductory discussion concerning
the variational characterization of the slope $\sl$.
Let us start by defining the functional $\ove E: [0,\infty) \times U \times
D(\phi) \to (-\infty,\infty]$ as
\begin{equation*}
    \ove E(\alpha,u,v) \coloneqq -\alpha d(u,v) + \phi(u) -\phi(v)- \mu_0
 d^q(u,v).
\end{equation*}
We  first  observe the following.
\begin{lemma}[Reduced functional]\label{lemma:v}
  For all $(\alpha,u)\in [0,\infty)\times U$  there exists the maximum   \begin{equation}
  E(\alpha,u) \coloneqq   \max_{ v\in D(\phi)}   \ove
 E(\alpha,u,v) \label{eq:argmax}
\end{equation}
 which  defines the reduced functional $E : [0,\infty)\times U
 \to [0,\infty]$. 
\end{lemma}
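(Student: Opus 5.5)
Fix $(\alpha,u)\in[0,\infty)\times U$. If $\phi(u)=\infty$, then choosing any $v\in D(\phi)$ (nonempty by \eqref{eq:A2}) gives $\ove E(\alpha,u,v)=\infty$, so the supremum is attained with value $\infty$ and there is nothing more to prove. From now on we take $u\in D(\phi)$. In this case we have $\ove E(\alpha,u,u)=0$, so $\sup_{v\in D(\phi)}\ove E(\alpha,u,v)\geq 0$; this is why $E$ takes values in $[0,\infty]$. To get existence of the maximum we use the direct method on the map $v\mapsto \ove E(\alpha,u,v)$.

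\textbf{Step 1 (upper semicontinuity).} The map $v\mapsto -\alpha d(u,v)-\mu_0 d^q(u,v)$ is continuous. The map $v\mapsto -\phi(v)$ is upper semicontinuous, because $\phi$ is lower semicontinuous by \eqref{eq:A2}. Hence $v\mapsto\ove E(\alpha,u,v)$ is upper semicontinuous on $U$, and it equals $-\infty$ outside $D(\phi)$. So maximizing over $D(\phi)$ is the same as maximizing over $U$.

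\textbf{Step 2 (compactness of a superlevel set).} Take a maximizing sequence $(v_n)$. We may assume $\ove E(\alpha,u,v_n)\geq \ove E(\alpha,u,u)=0$ for all $n$. Since $\alpha\geq0$ and $\mu_0\geq 0$, this gives
\[
\phi(v_n)\leq \phi(u)-\alpha d(u,v_n)-\mu_0 d^q(u,v_n)\leq \phi(u)<\infty .
\]
So every $v_n$ lies in the sublevel set $\{\phi\leq\phi(u)\}$, which is compact by \eqref{eq:A2}. We can therefore extract a subsequence converging to some $\bar v$ with $\phi(\bar v)\leq\phi(u)$, so in particular $\bar v\in D(\phi)$. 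By Step 1,
\[
\ove E(\alpha,u,\bar v)\geq\limsup_n \ove E(\alpha,u,v_n)=\sup_{v} \ove E(\alpha,u,v).
\]
Hence the supremum is attained at $\bar v$. Its value is finite and at most $\phi(u)$, since $\phi\geq0$ by \eqref{eq:A_min0}.

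The only step needing care is the a priori bound in Step 2, i.e. confining a maximizing sequence to a compact set. It rests only on the nonnegativity of $\alpha$ and $\mu_0$, which make the distance terms nonpositive, together with compactness of the sublevels of $\phi$. No coercivity in $d$ is needed.
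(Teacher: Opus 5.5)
Your proof is correct and follows the paper's argument: the direct method applied to the upper semicontinuous map $v\mapsto \ove E(\alpha,u,v)$, with a maximizing sequence confined to a compact sublevel of $\phi$ by comparison with a fixed competitor. The only difference is the competitor: you compare with $v=u$, which gives $\phi(v)\le\phi(u)$, while the paper compares with a minimizer $u_*$ of $\phi$, which gives $\phi(v)\le \alpha d(u,u_*)+\mu_0 d^q(u,u_*)$. Your choice is slightly cleaner and also yields the bound $E(\alpha,u)\le\phi(u)$, uniformly in $\alpha$.
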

\begin{proof}   If $u \notin D(\phi)$, then $\ove E(\alpha, u, v)=\infty$ for every $(\alpha, v) \in [0,\infty) \times D(\phi)$, so that the  maximum  in \eqref{eq:argmax} is $\infty$ and  is  achieved at any $v \in D(\phi)$.  Assume now that   $u \in D(\phi)$.
  As $\phi$ is
  lower semicontinuous,  for all $(\alpha,u)   \in [0,\infty) \times D(\phi)  $  the map $ v \in U
  \mapsto  \ove E(\alpha,u,v) = -\alpha d(u, v)+\phi(u)
  -\phi( v)  - \mu_0 d^q(u, v) \in [-\infty,\infty)$ is upper
  semicontinuous. Moreover, it is 
  finite iff $  v \in D(\phi)$. Letting $u_*\in \argmin \phi$ and using $\phi(u_*)=0$ by \eqref{eq:A_min0}, we can reduce the
  maximization  in \eqref{eq:argmax}  to those $ v\in D(\phi)$ such that $$-\alpha d(u, v)
  -\phi( v) - \mu_0  d^q(u, v)\geq -\alpha d(u,u_*)
  - \mu_0  d^q(u,u_*).$$
 Hence, one can maximize 
    over
  the compact set $\{ v \in D(\phi):\phi( v) \leq \alpha
  d(u,u_*) +{ \mu_0  }d^q(u,u_*)  \}$. The existence of a maximizer  follows by the Direct Method.
\end{proof}

Note that we are not claiming uniqueness for problem
\eqref{eq:argmax}. This nonetheless follows if $v \mapsto \alpha
d(u,v)+\phi(v) +  \mu_0  d^q(u,v)$ is {\it strictly} geodesically
convex for all $(\alpha,u)$. This holds independently of $\alpha
\geq 0$ if $\phi$ is
strictly geodesically convex and both $v\mapsto d(u,v)$ and $v\mapsto
 \mu_0  d^q(u,v)$ are 
convex along any geodesic. 

  Note that $ E(\alpha,u) \geq \ove E(\alpha,u,u)=0$  for all $\alpha\geq 0$ and
  $u \in  D(\phi)$. Moreover, $ E$ is convex with respect to $\alpha$, being the supremum of linear
  functions in $\alpha$. We have $ E(\alpha,u)=\infty$ for all $\alpha\geq
  0$ iff $u \not \in D(\phi)$. We collect some properties of $ E$ in the following.

\begin{proposition}[Properties of $ E$]\label{lemma:lsc}  
The reduced functional $ E:[0,\infty) \times U \to [0,\infty]$ is lower semicontinuous and there exists a Borel map  $\haz
v: [0,\infty) \times U \to U$ such that $ E(\alpha,u) =
\ove E(\alpha,u,\haz v(\alpha,u))$ for all $(\alpha,u)\in  [0,\infty) \times U $.
\end{proposition}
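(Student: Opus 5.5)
Two claims are to be shown: lower semicontinuity of $E$, and existence of a Borel selection $\haz v$ of maximizers.

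\textbf{Lower semicontinuity.} We write $E$ as a supremum:
\[
E(\alpha,u)=\sup_{v\in D(\phi)}\ove E(\alpha,u,v)\qquad\text{for } u\in D(\phi).
\]
For fixed $v\in D(\phi)$, the map $(\alpha,u)\mapsto -\alpha d(u,v)+\phi(u)-\phi(v)-\mu_0d^q(u,v)$ is lower semicontinuous, since $\phi$ is lower semicontinuous by \eqref{eq:A2} and the remaining terms are continuous. The convention $\ove E=\infty$ for $u\notin D(\phi)$ is consistent with this, because $\phi(u)=\infty$ there. A supremum of lower semicontinuous functions is lower semicontinuous, so $E$ is lower semicontinuous on $[0,\infty)\times U$. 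Alternatively one can argue sequentially: if $(\alpha_n,u_n)\to(\alpha,u)$, then for any fixed $v$ we have $\liminf_n E(\alpha_n,u_n)\ge \liminf_n \ove E(\alpha_n,u_n,v)\ge \ove E(\alpha,u,v)$, and we then take the supremum over $v$.

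\textbf{Borel selection.} The plan is to apply a measurable selection theorem to the argmax multifunction
\[
M(\alpha,u)=\{v\in D(\phi):\ove E(\alpha,u,v)=E(\alpha,u)\},
\]
which is nonempty by Lemma \ref{lemma:v}. On $[0,\infty)\times (U\setminus D(\phi))$ we set $\haz v\equiv u_*$, a fixed minimizer of $\phi$. This is Borel because $D(\phi)=\bigcup_n\{\phi\le n\}$ is a countable union of compact sets, hence Borel.

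On $[0,\infty)\times D(\phi)$ we localize to compact sets. The proof of Lemma \ref{lemma:v} shows that
\[
M(\alpha,u)\subset K_{\alpha,u}\coloneqq\{\phi\le \alpha d(u,u_*)+\mu_0d^q(u,u_*)\}.
\]
We partition the domain into the countably many Borel pieces $B_n=\{(\alpha,u):u\in D(\phi),\ n-1\le \alpha d(u,u_*)+\mu_0d^q(u,u_*)<n\}$. On $B_n$ every maximizer lies in the compact set $K_n=\{\phi\le n\}$. The key claim is that the graph
\[
\{(\alpha,u,v)\in B_n\times K_n:\ \ove E(\alpha,u,v)\ge E(\alpha,u)\}
\]
is Borel, and that each section is compact. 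The first holds because $\ove E$ and $E$ are Borel functions; $\ove E$ is Borel because $\phi$ is lower semicontinuous. For the second, for fixed $(\alpha,u)$ the set $\{v\in K_n:\ \ove E(\alpha,u,v)\ge E(\alpha,u)\}$ is a closed subset of $K_n$, since $v\mapsto\ove E(\alpha,u,v)$ is upper semicontinuous. We then invoke the Arsenin--Kunugui (or Novikov) uniformization theorem for Borel sets with $\sigma$-compact sections in a Polish product; here we use completeness and separability from \eqref{eq:A1}. This gives a Borel uniformizing map on the projection, which is all of $B_n$. Gluing over $n$ yields $\haz v$.

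\textbf{Main obstacle.} The delicate step is the selection. The function $\ove E$ is only upper semicontinuous in $v$ and merely lower semicontinuous jointly in $(\alpha,u)$ through $\phi(u)$, so the argmax map is not upper hemicontinuous and the Kuratowski--Ryll-Nardzewski theorem does not apply directly. One way around this is to notice that the $\phi(u)$ term does not depend on $v$. Hence
\[
M(\alpha,u)=\argmin_{v\in K_n}\bigl\{\alpha d(u,v)+\phi(v)+\mu_0 d^q(u,v)\bigr\}.
\]
The function being minimized is jointly lower semicontinuous in $(\alpha,u,v)$. Its minimum value $m(\alpha,u)$ over the compact $K_n$ is then lower semicontinuous; it is also upper semicontinuous, being an infimum of functions continuous in $(\alpha,u)$. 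So $m$ is continuous. It follows that the set
\[
\{(\alpha,u,v):\ \alpha d(u,v)+\phi(v)+\mu_0d^q(u,v)\le m(\alpha,u)\}
\]
is closed, and the argmax multifunction has closed graph with values in the compact $K_n$. It is therefore upper hemicontinuous with compact values, hence weakly measurable. The Kuratowski--Ryll-Nardzewski theorem then gives the Borel selection on each $B_n$ directly, avoiding the heavier uniformization theorem.
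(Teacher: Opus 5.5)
Your proof is correct. The lower semicontinuity argument is the paper's. Your first selection argument is also essentially the paper's: a Borel set of triples $(\alpha,u,v)$ with nonempty compact sections, followed by a Novikov/Arsenin--Kunugui-type uniformization (the paper invokes \cite[Theorem 6.9.6]{Bog07}).

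Two differences are worth recording.

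First, the paper builds $\Gamma$ over all of $[0,\infty)\times U$ and asserts that every section is compact. For $u\notin D(\phi)$, however, $\ove E(\alpha,u,\cdot)\equiv\infty=E(\alpha,u)$, so the section is all of $D(\phi)$, which is in general only $\sigma$-compact. Your choice $\haz v\equiv u_*$ off $D(\phi)$, or the $\sigma$-compact uniformization theorem you cite, removes this wrinkle cleanly.

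Second, your fallback route is genuinely different and more elementary. It rests on the observation that $\phi(u)$ does not depend on $v$. Hence the maximizers are exactly the minimizers over $K_n$ of the jointly lower semicontinuous function $g(\alpha,u,v)=\alpha d(u,v)+\phi(v)+\mu_0 d^q(u,v)$. Its value function $m$ is continuous: it is lower semicontinuous because the minimization is over a compact set, and upper semicontinuous as an infimum of continuous functions. The argmin map therefore has closed graph in the compact $K_n$, so it is upper hemicontinuous and hence weakly measurable, and Kuratowski--Ryll-Nardzewski applies.

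This route buys two things over the paper's one-line appeal to a uniformization theorem. It avoids descriptive set theory. It also exposes the structure $E(\alpha,u)=\phi(u)-m(\alpha,u)$ on $B_n$ with $m$ continuous, which explains the lower semicontinuity of $E$ at once. The paper's route is shorter but relies on the heavier theorem.

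One minor inaccuracy: the premise of your last paragraph, that the argmax map is not upper hemicontinuous, is false, as your own argument shows. What fails is only the naive Berge-type justification through joint semicontinuity of $\ove E$.
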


\begin{proof} The lower semicontinuity is immediate as the maps
  $(\alpha,u)\in [0,\infty) \times U \mapsto \ove E(\alpha,u,v)$ are lower
  semicontinuous for all $ v \in D(\phi)$ and $ E(\alpha,u)$
  is defined as a maximum on $v$. This also implies that $ E$ is a
  Borel map.

  Let $\Gamma \subset [0,\infty) \times U \times U$ be defined as
  \[
  \Gamma\coloneqq \{ (\alpha, u,v) \in [0,\infty) \times U \times D(\phi) :
  \ove E(\alpha, u, v) =  E(\alpha, u) \}.
  \]
  Since $\ove E$ and $ E$ are Borel maps, then $\Gamma$ is a Borel
  set. Its sections $\Gamma_{\alpha, u}$, given, for every $(\alpha, u) \in [0,\infty)\times U$, by
  \[
  \Gamma_{\alpha, u} \coloneqq \{ v \in U : (\alpha, u,v) \in \Gamma \} = \{
  v \in D(\phi) : \ove E(\alpha, u,v) =  E(\alpha, u)\},
  \]
  are not empty by Lemma \ref{lemma:v} and compact by  the 
  upper semicontinuity of $D(\phi) \ni v \mapsto \ove E(\alpha, u,v)$ and
   by   the compactness of the sublevels of $\phi$. By
  \cite[Theorem 6.9.6]{Bog07} we  obtain  the existence of a Borel map $\haz v: [0,\infty) \times U \to U$ such that its graph is contained in $\Gamma$.
\end{proof} 

 Under assumption \eqref{eq:A20},  for all $u \in
    U$   one has that 
    \begin{align} &\sl(u) \leq \alpha\  \Leftrightarrow \   \ove
    E(\alpha,u,v) =  - \alpha d(u,v)+\phi(u)-\phi(v) -{ \mu_0 
        } d^q(u, v)\leq 0\quad \forall v \in
  U.\label{eq:compare}
\end{align}
As $ E(\alpha,u)\geq 0$ for all $u \in D(\phi)$, the latter yields 
  \begin{equation} \sl(u) \leq
  \alpha \ \Leftrightarrow \  E(\alpha,u)=0. \label{eq:max}
\end{equation}
  This implies the following variational
characterization, which corresponds to a metric version of the
{\it Fenchel identity} in Banach spaces.

 \begin{proposition}[Characterization]\label{prop:char}
  Given $u\in D(\phi)$, we have that
   $$u \in D(\sl) \ \Leftrightarrow  \  A_u =\{ \alpha \geq 0 : 
   E( \alpha,u) =0\} \not = \emptyset.$$
In this case, $\sl(u)=\min  A_u $. 
 \end{proposition}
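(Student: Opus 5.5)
The plan is to obtain the statement directly from the equivalence \eqref{eq:max}, $\sl(u)\le\alpha \Leftrightarrow E(\alpha,u)=0$ for $u\in D(\phi)$. This equivalence in turn comes from \eqref{eq:compare}, which relies on assumption \eqref{eq:A20}. With \eqref{eq:max} available, the set $A_u$ can be identified exactly:
$$A_u=\{\alpha\ge 0:\ \sl(u)\le\alpha\}=[\sl(u),\infty)\cap[0,\infty),$$
with the convention that this set is empty when $\sl(u)=\infty$. Both claims then read off from this identity.

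First I would justify \eqref{eq:compare}, since the paper states it without proof. Assume $\sl(u)\le\alpha$. By \eqref{eq:A20}, $\sl(u)=\sl_{\mu_0}(u)$, so the definition \eqref{eq:slmu} gives, for every $v\neq u$,
$$\frac{(\phi(u)-\phi(v))^+}{d(u,v)}-\mu_0 d^{q-1}(u,v)\le\alpha.$$
Multiplying by $d(u,v)>0$ and using $x\le x^+$ yields $\ove E(\alpha,u,v)\le 0$. For $v=u$ the inequality is trivial, and for $v\notin D(\phi)$ we have $\ove E=-\infty$. Conversely, suppose $\ove E(\alpha,u,v)\le 0$ for all $v$. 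Dividing by $d(u,v)$ bounds each term in the supremum \eqref{eq:slmu} by $\alpha$. When $\phi(u)-\phi(v)<0$, the positive part vanishes and the term equals $-\mu_0 d^{q-1}\le 0\le\alpha$. Hence $\sl_{\mu_0}(u)\le\alpha$. Since $E\ge 0$ on $D(\phi)$ and $E(\alpha,u)=\max_v \ove E(\alpha,u,v)$ by Lemma \ref{lemma:v}, this gives \eqref{eq:max}.

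For the equivalence: if $u\in D(\sl)$, then $\alpha=\sl(u)\in[0,\infty)$ satisfies $\sl(u)\le\alpha$, so $E(\alpha,u)=0$ and $A_u\neq\emptyset$. Conversely, if some $\alpha\in A_u$ exists, then \eqref{eq:max} gives $\sl(u)\le\alpha<\infty$, so $u\in D(\sl)$, because $u\in D(\phi)$ by hypothesis.

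For the minimum: by the identification above, every $\alpha\in A_u$ satisfies $\alpha\ge\sl(u)$, and $\sl(u)$ itself belongs to $A_u$. Hence $\sl(u)=\min A_u$; in particular the minimum is attained, so there is no need to invoke lower semicontinuity of $E$ in $\alpha$. No step is a real obstacle. The only point requiring care is the bookkeeping in \eqref{eq:compare}: the positive part in \eqref{eq:slmu} and the excluded point $v=u$, which the paper glosses over.
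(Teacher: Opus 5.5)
Your proposal is correct and follows the paper's proof. The paper likewise reads both claims directly off \eqref{eq:max}: $\alpha=\sl(u)$ gives one direction, and $\sl(u)\le\alpha<\infty$ gives both the converse and minimality. Your extra derivation of \eqref{eq:compare} from \eqref{eq:A20} and the definition of $\sl_{\mu_0}$ is accurate, including the handling of the positive part, of $v=u$, and of $v\notin D(\phi)$; it fills in a step the paper states without proof.
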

 \begin{proof}
   If $u \in D(\sl)$ one has that $ E(\sl(u),u)=0$ from
   \eqref{eq:max}, hence $\sl(u) \in  A_u \not =\emptyset$.  Conversely,   if
   $ \alpha  \in  A_u $ then $ E( \alpha,u)=0$ so that $\sl (u)\leq
   \alpha<\infty$ and $u \in D(\sl)$. In
   this case, again \eqref{eq:max} guarantees that $\sl(u)$ is minimal
   in $ A_u $. 
 \end{proof}

 Proposition \ref{prop:char} ensures that for all $u \in D(\phi)$ one can variationally
 characterize $\sl(u)$ as follows
 \begin{equation}\sl(u)  = \argmin_{ E(
   \alpha,u)=0}   \psi^*( \alpha) ,\label{eq:strong}
 \end{equation}
 where we recall that $\psi^*$ from \eqref{eq:A4} is strictly
 increasing on $[0,\infty)$. As $ E$ is nonnegative, this provides the possibility of penalizing the equation
 $\alpha=\sl(u)$ by considering an unconstrained minimization problem.

 \begin{proposition}[Penalization]\label{prop:pen}
     For all $u \in D(\phi)$ and $\delta >0$
   one can find
   \begin{equation}\label{eq:min}
     \alpha_\delta \in \argmin_{\alpha\geq 0}  \left\{\psi^*( \alpha) + \frac1\delta  E(
     \alpha,u) \right\}.
   \end{equation}
   If $u \in D(\sl)$,  we have that $\alpha_\delta \to \sl(u)$ as
   $\delta \to 0$. 
 \end{proposition}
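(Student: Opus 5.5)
Existence of a minimizer follows from the direct method applied to the function $f_\delta(\alpha) \coloneqq \psi^*(\alpha) + \frac1\delta E(\alpha,u)$ on $[0,\infty)$, for fixed $u\in D(\phi)$. The three ingredients are as follows.

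\begin{itemize}
\item Lower semicontinuity: $\alpha\mapsto E(\alpha,u)$ is lower semicontinuous by Proposition \ref{lemma:lsc}. It is in fact convex and finite, since $E(\alpha,u) \le \phi(u)<\infty$ because $-\alpha d - \mu_0 d^q - \phi(v)\le 0$. The function $\psi^*$ is convex, finite (by \eqref{eq:upperbound0}) and continuous.
\item Coercivity: by \eqref{eq:A4}, $\psi^*(\alpha)\ge c_\psi\alpha^{p'} - 1/c_\psi \to\infty$ as $\alpha\to\infty$, and $E\ge 0$.
\item Properness: $f_\delta(0)=\psi^*(0)+E(0,u)/\delta\le \phi(u)/\delta<\infty$.
\end{itemize}

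Hence minimizing sequences are bounded, a subsequence converges, and lower semicontinuity yields a minimizer $\alpha_\delta$.

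For the convergence, assume $u\in D(\sl)$ and set $\bar\alpha\coloneqq \sl(u)$, so that $E(\bar\alpha,u)=0$ by \eqref{eq:max}. Comparing the minimizer with $\bar\alpha$ gives
\begin{equation*}
\psi^*(\alpha_\delta)+\frac1\delta E(\alpha_\delta,u)\le \psi^*(\bar\alpha).
\end{equation*}
From this, $\psi^*(\alpha_\delta)\le\psi^*(\bar\alpha)$. Since $\psi^*$ is strictly increasing on $[0,\infty)$, this forces $\alpha_\delta\le\bar\alpha$, so the family is bounded. The same inequality also gives $E(\alpha_\delta,u)\le \delta\,\psi^*(\bar\alpha)\to 0$.

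Now take any sequence $\delta_k\to0$ and a further subsequence with $\alpha_{\delta_k}\to\tilde\alpha\in[0,\bar\alpha]$. Lower semicontinuity of $E(\cdot,u)$, or simply its continuity as a finite convex function, gives $E(\tilde\alpha,u)\le\liminf_k E(\alpha_{\delta_k},u)=0$. Hence $E(\tilde\alpha,u)=0$, and \eqref{eq:max} (equivalently Proposition \ref{prop:char}) yields $\sl(u)\le\tilde\alpha$. Combined with $\tilde\alpha\le\bar\alpha$, this gives $\tilde\alpha=\sl(u)$. Since the limit is independent of the subsequence, the whole family converges: $\alpha_\delta\to\sl(u)$.

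The only delicate point is the monotonicity step that bounds $\alpha_\delta$ by $\sl(u)$. It relies on the strict monotonicity of $\psi^*$ on $[0,\infty)$ recorded in Section \ref{sec:setting}, and everything else is routine.
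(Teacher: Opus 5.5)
Your proof is correct and follows the paper's route: direct method for existence, then comparison with $\sl(u)$ via $E(\sl(u),u)=0$, lower semicontinuity of $E(\cdot,u)$ to get $E(\tilde\alpha,u)=0$ for every cluster point, and \eqref{eq:max} to conclude. The only difference is cosmetic. You use strict monotonicity of $\psi^*$ at fixed $\delta$ to get $\alpha_\delta\le\sl(u)$, whereas the paper first passes to the limit and then uses lower semicontinuity of $\psi^*$. Also, your alternative justification ``continuity as a finite convex function'' alone would not cover the endpoint $\tilde\alpha=0$, but the lower semicontinuity you cite first does.
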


 \begin{proof}
Let us first check that the minimization problem \eqref{eq:min}  can be solved.
   As $ E$ is nonnegative  and finite on $D(\phi)$  and $\psi^*$ is coercive, one has
   that $ \alpha \in [0,\infty) \mapsto \psi^*( \alpha) +  E(
   \alpha,u)/\delta$ is coercive, as well. The existence of $\alpha_\delta$ for
   all $\delta >0$
   follows then by the lower semicontinuity of $\psi^*$ and by
   Proposition \ref{lemma:lsc}.

   Assume now that $u \in D(\sl)$. By comparing with $\sl(u)$ and
   using the fact that $ E(\sl (u),u)=0$
   we obtain that 
   \begin{equation}\psi^*( \alpha_\delta) + \frac1\delta  E(
   \alpha_\delta,u) \leq \psi^*(\sl(u)) +\frac1\delta  E(\sl (u),u)   =
   \psi^*(\sl(u)).\label{eq:estimf}
   \end{equation}
   On the one hand, as $ E$ is nonnegative this proves that $\alpha_\delta$ is bounded
   independently of $\delta$, so that $\alpha_\delta \to \alpha$ (up to
   some not relabeled subsequence) as
   $\delta \to 0$. On the other hand, as $\psi^*$ is nonnegative, inequality \eqref{eq:estimf} implies that
   $$ 0\leq  E(\alpha,u) \leq \liminf_{\delta \to 0}
   E(\alpha_\delta, u) \leq \liminf_{\delta \to 0} \delta\, \psi^* (\sl(u))
   = 0$$
   proving that $ E(\alpha,u)=0$, or $\sl(u)\leq \alpha$ by \eqref{eq:max}. Hence, we get
   that $\psi^*(\sl(u))\leq \psi^*(\alpha)$ since $\psi^*$ is
   (strictly) increasing on $[0,\infty)$. Again by
   \eqref{eq:estimf}, we get
$$\psi^*(\alpha) \leq \liminf_{\delta \to 0}\psi^*(\alpha_\delta)
\leq \psi^*(\sl(u))$$
   which implies $\alpha = \sl(u)$, as $\psi^*$ is strictly
   increasing on $[0,\infty)$. The above argument  ensures  that any subsequence of
   $(\alpha_\delta)_\delta$ has a converging subsequence to the same
   limit. This proves that the whole sequence $(\alpha_\delta)_\delta$
   converges to
   $\sl (u)$ and
    no extraction is actually necessary.
  \end{proof}

\section{Metric  Brezis--Ekeland--Nayroles principle}
\label{sec:BEN}

 In this section, we introduce the metric version of the
Brezis--Ekeland--Nayroles principle (Theorem \ref{prop:BEN}) and discuss some of its
properties. Let us   endow $[0,T]$ with the Lebesgue measure and indicate
  by $M([0,T]; U)$ and $M([0,T];[0,\infty)\times U)$  the sets of Borel
  measurable maps with values in $U$ and $[0,\infty) \times U$, respectively. Owing to
  Proposition \ref{lemma:lsc}, for all $(\alpha,u) \in
  M([0,T];[0,\infty)\times U)$ we have that $t \in [0,T]\mapsto  
  E(\alpha(t),u(t))$ is measurable and $ v\coloneqq\haz v(\alpha,u) \in
  M([0,T];U)$ is such that $
  E(\alpha(t),u(t))= \ove E(\alpha(t),u(t), v (t))$ for all $t \in
  [0,T]$. Recalling that $ E $  is nonnegative,  we  define $ \EE: M([0,T];[0,\infty) \times
 U) \to [0,\infty]$ as
  $$ \EE(\alpha,u) = 
  \disp\int_0^T E(\alpha(r),u(r))\, \dr.$$
Analogously to the static setting,  we have that $\EE $  is
nonnegative and  
$\EE(\alpha,u) < \infty$ implies that $u  \in D(\phi)$ almost
everywhere in 
$[0,T]$.

By recalling that $u^0 \in D(\phi)$ from
\eqref{eq:A3}, we indicate by 
\begin{equation}
  \label{eq:K}
  K(u^0)\coloneqq\{(\alpha,u)\in L^{p'}(0,T)\times AC^p([0,T];U) : u(0)=u^0\}
\end{equation}
the set of {\it admissible trajectories}. Moreover, we define the functionals $\FF: L^{p'}(0,T)\times
AC^p([0,T];U) \to (-\infty,\infty]$ and $\GG,\, 
\GG_\delta,  \HH : L^{p'}(0,T)\times
AC^p([0,T];U)\to (-\infty,\infty]$ for $\delta>0$ as
\begin{align*}
   &\FF(\alpha,u) = 
     \phi(u(T))-\phi(u(0)) + \int_0^T\psi(
       |u'|(r))\, \dr +  \int_0^T \psi^*(\alpha (r))\, \dr , \\
   & \GG(\alpha,u) =
     \left\{
     \begin{array}{ll}
       \FF(\alpha,u) \quad&\text{if} \ \EE (\alpha,u)=0,\\[2mm]
       \infty&\text{otherwise},
     \end{array}
               \right. \\
  &\GG_\delta(\alpha,u) = \FF(\alpha,u) + \frac1\delta \EE(\alpha,u),\\[2mm]
  & \HH(\alpha,u) = (\FF(\alpha,u))^+ + \EE(\alpha,u).
\end{align*}

We are now ready to present  our metric version of the Brezis--Ekeland--Nayroles principle \cite{Brezis-Ekeland76b,Brezis-Ekeland76,Nayroles76,Nayroles76b}, delivering a
characterization of
curves of maximal slope. 
As a preparation, let us remark that  both $\HH $ and
 $\GG$  are  nonnegative on
$K(u^0)$.  Indeed, $\HH \geq 0$ trivially follows as $\EE
\geq 0$. Assume  by contradiction that $\GG(\alpha, u)<0$ for some
$(\alpha , u) \in K(u^0)$. As $ \EE ( \alpha,u)=0$, we have  
  that $\sl( u) \leq \alpha $ almost everywhere in $[0,T]$  by \eqref{eq:max}, and
$\FF(\alpha,u)=\GG(\alpha,u)<0$.  As $\psi^*(\sl( u))\leq \psi^*(\alpha)$ almost everywhere in
 $[0,T]$, we have $\FF(\sl( u),  u)\leq \FF(\alpha,  u)<0$. On the other
 hand, as $\sl$ is a strong upper gradient, one finds 
 $\phi\circ u\in W^{1,1}(0,T)$ and the chain rule yields
 \begin{align*}
   &\FF(\sl( u) , u) = \phi( u(T))-\phi(u(0))+ \int_0^T \psi(| u'|(r))\,
   \dr + \int_0^T \psi^*(\sl( u(r)))\, \dr \\
   &\quad \geq \int_0^T \big( - \sl( u(r)) | u'|(r)
                       +\psi(| u'|(r))+\psi^*( \sl( u(r))\big)
                       \dr \geq 0
 \end{align*}
 from Fenchel's inequality, leading to a contradiction.

 \begin{theorem}[Null-minimization]\label{prop:BEN}
    The following are equivalent:
   \begin{itemize}
   \item[\rm (i)] \ \  $u$ is a curve of maximal slope,\vspace{0.6mm}
     \item[\rm (ii)] \  \ $
  \GG(\alpha,u)=\min_{K(u^0)} \GG=0$ for some $\alpha \in
  L^{p'}(0,T)$,
  \item[\rm (iii)]  \ \   $
  \HH(\alpha,u)=\min_{K(u^0)} \HH=0$ for some $\alpha \in
  L^{p'}(0,T)$.
   \end{itemize}  
\end{theorem}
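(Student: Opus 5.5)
We prove the cycle (i)$\Rightarrow$(ii)$\Rightarrow$(iii)$\Rightarrow$(i). The tools are the nonnegativity of $\GG$ and $\HH$ on $K(u^0)$, proved just before the statement; the equivalence \eqref{eq:max}; and the chain-rule/Fenchel argument, which uses that $\sl$ is a strong upper gradient under \eqref{eq:A20}.

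\textbf{(i)$\Rightarrow$(ii).} Let $u$ be a curve of maximal slope and set $\alpha\coloneqq\sl(u)$. This is Borel measurable because $\sl=\sl_{\mu_0}$ is lower semicontinuous. Evaluating \eqref{eq:curve} at $t=T$ makes $\int_0^T\psi^*(\sl(u))\,\dr$ finite. Then the growth condition \eqref{eq:A4} gives $\alpha\in L^{p'}(0,T)$, so $(\alpha,u)\in K(u^0)$. Since $\alpha(t)=\sl(u(t))<\infty$ for a.e.\ $t$, \eqref{eq:max} gives $E(\alpha(t),u(t))=0$ a.e., hence $\EE(\alpha,u)=0$. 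Then $\GG(\alpha,u)=\FF(\alpha,u)$, and this vanishes by \eqref{eq:curve} at $t=T$. As $\GG\ge0$ on $K(u^0)$, we get $\GG(\alpha,u)=\min_{K(u^0)}\GG=0$.

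\textbf{(ii)$\Rightarrow$(iii).} If $\GG(\alpha,u)=0$, then $\EE(\alpha,u)=0$ and $\FF(\alpha,u)=0$. Hence $\HH(\alpha,u)=0^++0=0$, and this equals $\min_{K(u^0)}\HH$ because $\HH\ge0$.

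\textbf{(iii)$\Rightarrow$(i).} This is the main step. From $\HH(\alpha,u)=0$ we get $\EE(\alpha,u)=0$ and $\FF(\alpha,u)\le0$. Since $E\ge 0$, $E(\alpha,u)=0$ a.e., so by \eqref{eq:max} $\sl(u)\le\alpha$ a.e. In particular $u\in D(\phi)$ a.e. and $\sl(u)\in L^{p'}$.

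Because $\sl$ is a strong upper gradient and $\sl(u)|u'|\in L^1$, we have $\phi\circ u\in W^{1,1}(0,T)$ and $(\phi\circ u)'\ge-\sl(u)|u'|$ a.e. Monotonicity of $\psi^*$ then gives
\[0\ge\FF(\alpha,u)\ge\FF(\sl(u),u)\ge\int_0^T\big(\psi(|u'|)+\psi^*(\sl(u))-\sl(u)|u'|\big)\,\dr\ge0 .\]
So all these inequalities are equalities. First, $\psi^*(\alpha)=\psi^*(\sl(u))$ a.e., which by strict monotonicity yields $\alpha=\sl(u)$ a.e. Second, the integrand $\psi(|u'|)+\psi^*(\sl(u))+(\phi\circ u)'$ is nonnegative a.e. and has zero integral over $[0,T]$, so it vanishes a.e. Integrating over $[0,t]$ gives \eqref{eq:curve} for every $t\in[0,T]$, since $\phi\circ u$ is absolutely continuous.

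The delicate point in this step is using the chain-rule inequality pointwise rather than only integrated over $[0,T]$, so that the identity \eqref{eq:curve} holds for every $t$ and not just at $t=T$. Finally, $u(0)=u^0$ because $(\alpha,u)\in K(u^0)$, so $u$ is a curve of maximal slope.
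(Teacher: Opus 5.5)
Your proof is correct and is essentially the paper's argument. For (i)$\Rightarrow$(ii),(iii) you take $\alpha=\sl(u)$, and for the converse you combine \eqref{eq:max}, the strong-upper-gradient chain rule, Fenchel's inequality and the monotonicity of $\psi^*$, exactly as the paper does. The remaining differences are cosmetic: you reach (i) from (ii) through the trivial implication (ii)$\Rightarrow$(iii), and you get \eqref{eq:curve} for every $t$ from the a.e.\ vanishing of a nonnegative integrand with zero integral, instead of the paper's two-sided estimates on $[0,t]$ and $[t,T]$ (you also obtain $\alpha=\sl(u)$ a.e.\ as a by-product).
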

\begin{proof}
  Let $u$ be a curve of maximal slope. Then $u(0)=u^0$, $\sl (u) \in
  L^{p'}(0,T)$, and \eqref{eq:curve} for
  $t=T$ gives $\FF(\sl(u),u)=0$. Since in particular $u \in D(\sl) $
  almost everywhere in $[0,T]$, we have that $\EE(\sl(u),u)=0$ and $
  \GG(\alpha,u)  = \HH (\alpha,u)  =0$ follows by choosing
  $\alpha = \sl(u)$.  Hence (i) implies both (ii) and (iii). 

 We now check (ii) $\Rightarrow$ (i). Let    $(\alpha,u)\in K(u^0)$ with  
  $\GG(\alpha,u)=0$. This implies
  that  $\EE(\alpha,u) = 0$, so that $\FF(\alpha,u) =   \GG(\alpha,u)=0$. As $\sl(u) \leq \alpha$ almost everywhere in $[0,T]$  by \eqref{eq:max}, we have proved that $u \in
  D(\sl)$ almost everywhere in $[0,T]$ and  
  $\sl(u)\in L^{p'}(0,T)$. As $\sl$ is a strong upper gradient for
  $\phi$, this implies that $\phi\circ u\in W^{1,1}(0,T)$. In
  particular, for all $t\in [0,T]$ we have that
  \begin{align*}
    &\phi(u^0)-\phi(u(t))\leq |\phi(u(t))-\phi(u^0)| \leq \int_0^t \sl(u(r))\, |u'|(r)\, \dr \\
    &\quad\leq
    \int_0^t\psi( |u'| (r))\, \dr +  \int_0^t\psi^*(\sl  (u(r)))\,
    \dr 
  \end{align*}
 giving the inequality
  \begin{equation} \phi(u(t)) - \phi(u^0) + 
     \int_0^t\psi( |u'| (r))\, \dr +  \int_0^t\psi^*(\sl (u(r)))\,
    \dr \geq 0.\label{eq:q1}
    \end{equation}
 On the other hand, from $\sl(u) \leq \alpha$ almost everywhere in
 $[0,T]$ we deduce that $\psi^*(\sl(u))\leq \psi^*(\alpha)$ almost everywhere in
 $[0,T]$, hence $\FF(\sl(u),u)\leq \FF(\alpha,u)=0$.  By using the fact that 
    \begin{align}
      &-\phi(u(T)) +\phi(u(t)) \leq |\phi(u(T))-\phi(u(t))|\leq
      \int_t^T \sl(u(r))\, |u'|(r)\, \dr \nonumber \\
      &\quad \leq
   \int_t^T\psi( |u'| (r))\, \dr +  \int_t^T\psi^*(\sl  (u(r)))\,
    \dr, \label{eq:q2}
    \end{align}
    we can check that 
  \begin{align*}
    &\phi(u(t))-\phi(u^0) + \int_0^t\psi( |u'| (r))\, \dr +
      \int_0^t\psi^*(\sl  (u(r)))\,
    \dr  \\[1mm]
    &\quad = \FF(\sl (u),u) -\phi(u(T))+\phi(u(t)) \\
    &\quad - \int_t^T\psi( |u'| (r))\, \dr -
      \int_t^T\psi^*(\sl (u(r)))\,
    \dr   \leq 0.
  \end{align*}
  where in the last inequality we have used $\FF(\sl (u),u)\leq \FF(\alpha,u)=0$ and
  \eqref{eq:q2}. This proves the converse inequality to
  \eqref{eq:q1}. As $t \in [0,T]$ is arbitrary,   \eqref{eq:curve}
  follows and $u$ is a curve of maximal slope.

   Let us now prove that (iii) $\Rightarrow$ (i). As
  $\HH(\alpha,u)=0$ we again have that $  \EE(\alpha,u)=0$, as well as
  $\FF(\alpha,u) \leq 0$. As $  \sl(u)\leq \alpha$ almost everywhere in $[0,T]$, this implies that  
  $\FF(\sl(u),u)\leq \FF(\alpha,u) \leq 0$. We can hence  obtain 
  \eqref{eq:q1}--\eqref{eq:q2}, proving again that $u$ is a curve of
  maximal slope. 
\end{proof}

\begin{remark}[$\GG_\delta$ may be negative]
  Before moving on, let us remark that we cannot expect  Theorem
   \ref{prop:BEN} to hold for $\delta>0$ as $\GG_\delta$ may take
  negative values. Indeed, if $\delta >0$ it might be convenient to
  choose some $\alpha<\sl(u)$ making $\FF(\alpha,u)$ negative, despite
   having  a positive $\EE(\alpha,u)$. Let us provide an
  elementary example in this direction. Set $U=[0,\infty)$,
  $\phi(u) = \lambda u^2/2$ for some $\lambda >0$, and
  $\psi(t) = t^2/2$. In this case, we readily compute that
  $\sl(u) =\lambda u$. For all $\alpha \geq 0$ and $ u \geq 0$ we 
  have  that
  $ E(\alpha, u) = ((\lambda u - \alpha)^+)^2/(2\lambda)$. One then
  considers
$$ [0,\infty] \ni \alpha  \mapsto g_{ u}( \alpha)\coloneqq\frac12 \alpha^2 +
\frac{1}{\delta} E(\alpha, u) = \frac12 \alpha^2 +
\frac{1}{2\delta\lambda }((\lambda u - \alpha)^+)^2$$ which is
minimized at $ \alpha_{ u} \coloneqq \lambda u/ (1+ \lambda \delta)$
with value
$g_{ u}( \alpha_{ u}) = \lambda^2 u^2/(2(1+\lambda \delta)) $. Hence,
for all $ u \in AC^2([0,T];U) = H^1(0,T;U)$ with $ u>0$ we compute
\begin{align*}
  & \GG_\delta( \alpha_{ u}, u) = \FF(\alpha_{
    u}, u)+\frac{1}{\delta}\EE( \alpha_{ u}, u) \\
  &\quad=
    \frac{\lambda}{2}u^2(T) -  \frac{\lambda}{2}u^2(0) +
    \frac12\int_0^T | u'(r)|^2\, \dr +
    \frac{1}{ 2(1+\lambda\delta)}\int_0^T |\lambda  u(r)|^2\,
    \dr\nonumber\\
  &\quad <  \frac{\lambda}{2}u^2(T) -  \frac{\lambda}{2}u^2(0) +
    \frac12\int_0^T | u'(r)|^2\, \dr +
    \frac{1}{2}\int_0^T |\lambda  u(r)|^2\,
    \dr \nonumber\\
  &\quad =\FF(\lambda  u, u)\leq \GG_\delta (\lambda 
    u, u).
\end{align*}
By choosing $u_0=1$, setting
$t\in [0,T]\mapsto \ove u(t) \coloneqq {\rm e}^{-\lambda t}$, and
observing that $\ove u '+\lambda \ove u=0$  on $[0,T]$  and
$(\lambda \ove u ,\ove u)\in K(u^0)$, one has that
$\GG_\delta(\lambda \ove u ,\ove u)=0$. This implies that
$\GG_\delta(\alpha_{\ove u}, \ove u)<\GG_\delta(\lambda \ove u ,\ove
u)=0$.
\end{remark}


\section{Minimization} \label{sec:minimization}
 
 Theorem  \ref{prop:BEN} links curves of maximal slope with
($u$-components of) minimizers of $\GG$  and $\HH$  
on $K(u^0)$. This opens
the way  to   providing an alternative proof of the existence of curves
of maximal slope by minimization. We start by checking lower
semicontinuity.

\begin{proposition}[Lower semicontinuity of $\EE$]\label{prop:BBlsc}
   Let  $\alpha_j \to \alpha$ weakly in $L^{ p'}(0,T)$, $u_j, \, u \in
   M  ([0,T];U)$ with  $u_j\to u$ almost everywhere in
  $[0,T]$,     and 
  
  \[
 d(u_j(s), u_j(t)) \le \int_s^t \eta_j(r) \dr \quad \forall 0 \le s \le t \le T, j \in \Nz
  \]
   for some $\eta_j \in L^1(0,T)$ converging to $\eta$ weakly in $L^1(0,T)$.  Then, 
  \begin{equation*}
    \EE (\alpha,u)\leq \liminf_{j\to \infty} \EE(\alpha_j,u_j).
  \end{equation*} 
\end{proposition}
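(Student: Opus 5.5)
The plan is to avoid any abstract Ioffe-type lower semicontinuity theorem and instead exploit that $E(\alpha,u)$ is a supremum of functions which are \emph{affine} in $\alpha$. I would freeze a measurable competitor $v$ chosen for the limit pair $(\alpha,u)$ and use it as a test point for every $(\alpha_j,u_j)$. Then weak convergence of $\alpha_j$ only has to be paired against a strongly converging factor.

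\emph{Choice of competitor.} By Proposition \ref{lemma:lsc}, the map $v(t)\coloneqq\haz v(\alpha(t),u(t))$ is measurable, takes values in $D(\phi)$, and satisfies $E(\alpha(t),u(t))=\ove E(\alpha(t),u(t),v(t))$ for all $t$. For $R>0$ set
$$A_R\coloneqq\{t: d(u(t),v(t))\le R,\ \phi(v(t))\le R\},$$
and define $v_j\coloneqq v$ on $A_R$ and $v_j\coloneqq u_j$ on $[0,T]\setminus A_R$. At times where $u_j(t)\notin D(\phi)$ the integrand $E(\alpha_j,u_j)$ equals $\infty\geq$ anything, so these times are harmless. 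Since $E\ge \ove E(\cdot,\cdot,w)$ for every $w\in D(\phi)$ and $\ove E(\alpha_j,u_j,u_j)=0$, I get
$$\EE(\alpha_j,u_j)\ \ge\ \int_{A_R}\Big(-\alpha_j d(u_j,v)+\phi(u_j)-\phi(v)-\mu_0 d^q(u_j,v)\Big)\,\dr .$$

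\emph{Uniform bound from the $\eta_j$ hypothesis.} This is where the assumption on $\eta_j$ enters, and it is the main technical point. Weak convergence in $L^1$ implies equi-integrability, so $\sup_j\|\eta_j\|_{L^1}=:C<\infty$. Fix $t_0$ with $u_j(t_0)\to u(t_0)$. Then $d(u_j(t),u_j(t_0))\le C$ for all $t$ and $j$. Combining this with $d(u_j(t_0),u(t_0))\to0$ and the triangle inequality gives, on $A_R$,
$$d(u_j(t),v(t))\le d(u_j(t),u(t))+R.$$
If $d(u(\cdot),u(t_0))$ is not bounded, I would additionally intersect $A_R$ with $\{d(u,u(t_0))\le R\}$; this set also exhausts $[0,T]$ as $R\to\infty$. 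On this set $d(u_j,v)\le 2C+3R+1$ for $j$ large. Hence $d(u_j,v)\mathbf 1_{A_R}\to d(u,v)\mathbf 1_{A_R}$ almost everywhere with a uniform $L^\infty$ bound, and therefore strongly in $L^{p}$ by dominated convergence. The same holds for $d^q(u_j,v)$ in $L^1$.

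\emph{Passing to the limit.} The term $\int_{A_R}\alpha_j d(u_j,v)$ converges to $\int_{A_R}\alpha\, d(u,v)$ as a weak–strong pairing of $L^{p'}$ against $L^p$. The term $\int_{A_R}\mu_0 d^q(u_j,v)$ converges by dominated convergence. The term $\int_{A_R}\phi(v)$ is a fixed finite quantity, bounded by $RT$. Finally, $\liminf_j\int_{A_R}\phi(u_j)\ge\int_{A_R}\phi(u)$ by Fatou's lemma, using $\phi\ge0$ and the lower semicontinuity of $\phi$; this remains valid if $\phi(u)=\infty$ on a set of positive measure. Hence
$$\liminf_j\EE(\alpha_j,u_j)\ge\int_{A_R}E(\alpha(r),u(r))\,\dr .$$
Since $v(t)\in D(\phi)$ for every $t$, the sets $A_R$ increase to $[0,T]$ as $R\to\infty$, and $E\ge0$. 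Monotone convergence then gives $\EE(\alpha,u)\le\liminf_j\EE(\alpha_j,u_j)$.
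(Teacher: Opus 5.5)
Your proof is correct and follows essentially the same route as the paper. You freeze the Borel maximizer $v=\haz v(\alpha,u)$ of the limit pair, control $d(u_j,u)$ uniformly through $\sup_j\|\eta_j\|_{L^1}$ and a time $t_0$ with $u_j(t_0)\to u(t_0)$, and conclude by dominated convergence, the weak--strong pairing of $\alpha_j$ against $d(u_j,v)$, and Fatou's lemma for $\phi(u_j)$. Your truncation to $A_R$ (using $E\ge 0$ off $A_R$) followed by monotone convergence in $R$ is a genuine gain in rigor: it supplies the integrability of $d(u,v)$ and $\phi(v)$ that the paper uses tacitly when it asserts $d(u_j,v)\to d(u,v)$ strongly in $L^r(0,T)$ and splits $\EE(\alpha,u)$ into separate integrals.
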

\begin{proof} 
  Let $ v = \haz v(\alpha,u) \in M([0,T];U)$ from Proposition
  \ref{lemma:lsc}, so that $ E(\alpha,u) =
  \ove E(\alpha,u, v)$  almost everywhere in  $[0,T]$.
 Using the triangle inequality
  we readily check that
$$|d(u(t), v (t)) - d(u_j(t), v(t))| \leq d(u(t),u_j(t))\to 0
\quad  \text{for a.e.} \ t \in [0,T].$$
 
   Let $t_0\in [0,T]$ be such that $u_j(t_0)\to u(t_0)$. For almost all $t \in [0,T]$ we have that
  $$d(u(t),u(t_0)) = \lim_{j\to \infty}d(u_j(t),u_j(t_0)) \leq  \lim_{j \to \infty} \int_{t\wedge t_0}^{t\vee t_0} \eta_j(r) \,\d r =  \int_{t\wedge t_0}^{t\vee t_0} \eta(r)\, \d r. $$
Hence, we can bound
\begin{align*}
  &d(u(t),u_j(t)) \leq  d(u(t),u(t_0)) + d(u(t_0),u_j(t_0)) +
 d(u_j(t_0),u_j(t))  \\
  &\quad  \leq \int_{t\wedge t_0}^{t\vee t_0} \eta (r)\, \dr +  d(u(t_0),u_j(t_0)) + \int_{t\wedge t_0}^{t\vee t_0} 
     \eta_j (r)\, \d r  \quad \text{for a.e.} \ t \in [0,T].
\end{align*}
We then use that $\eta_j$  converges  to  $\eta$ to further bound  the right-hand side. It is thus bounded independently of $j$ and $t
\in [0,T]$, hence the
Dominated Convergence Theorem implies that $d(u_j, v) \to
d(u, v)$ strongly in  $L^r(0,T)$  for every $r \in (1,\infty)$.   
  Using the lower semicontinuity of $\phi$ and Fatou's Lemma  as
  $ \phi  $ is nonnegative  we can
  hence  deduce that 
  \begin{align*}
    &\EE(\alpha,u) = \int_0^T
    \left(-\alpha(r)d(u(r), v(r))+\phi(u(r)) - \phi( v (r))
    -  \mu_0  d^q(u(r), v (r)) \right)\, \dr \\
    &\leq \liminf_{j\to \infty} \int_0^T
    \left(-\alpha_j(r)d(u_j(r), v(r)){+}\phi(u_j(r)) {-} \phi( v (r))
     {-}  \mu_0  d^q(u_j(r), v (r))  \right)\, \dr \\
    & \leq \liminf_{j\to \infty} \int_0^T  E
    (\alpha_j(r),u_j (r))\, \dr
  \end{align*}
   by taking the maximum. This proves  the claim.
\end{proof}

\begin{corollary}[Lower semicontinuity of $\GG$  and $\HH$]\label{prop:FFlsc}
  Let $(\alpha_j,u_j)_j \in K(u^0) $ be  such that
  $\alpha_j \to \alpha$ weakly in  $L^{p'}(0,T)$,   $u_j(t)
  \to u(t)$ for  almost  all $t\in[0,T]$  with $u(0)=u^0$, $u \in AC^p([0,T];U)$,
  \[
 d(u_j(s), u_j(t)) \le \int_s^t \eta_j(r) \,\dr \quad \forall 0 \le s \le t \le T, j \in \Nz
  \]
for some $\eta_j \in L^1(0,T)$ converging to $\eta$ weakly in $L^1(0,T)$, and $u_j(T) \to u(T)$. Then  
  \begin{equation*}
    \GG(\alpha,u)\leq \liminf_{j\to \infty}
    \GG(\alpha_j,u_j)\quad  \text{and} \quad \HH(\alpha,u)\leq \liminf_{j\to \infty}
    \HH(\alpha_j,u_j).
  \end{equation*} 
\end{corollary}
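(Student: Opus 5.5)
The lower semicontinuity of $\EE$ comes directly from Proposition \ref{prop:BBlsc}, whose hypotheses are exactly those assumed here. So the main work is to show that $\FF$ is lower semicontinuous along the sequence. I then combine the two, treating $\GG$ and $\HH$ separately.

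For $\FF$ I split it into its four terms. The term $\phi(u(0))=\phi(u^0)$ does not depend on $j$, since every $(\alpha_j,u_j)$ lies in $K(u^0)$ and $u(0)=u^0$. Because $u_j(T)\to u(T)$ and $\phi$ is lower semicontinuous (its sublevels are compact by \eqref{eq:A2}), we get $\phi(u(T))\le\liminf_j\phi(u_j(T))$. Since $\psi^*$ is convex and lower semicontinuous, the integral $\alpha\mapsto\int_0^T\psi^*(\alpha)\,\dr$ is weakly lower semicontinuous in $L^{p'}(0,T)$, so it passes to the liminf under $\alpha_j\rightharpoonup\alpha$.

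The dissipation term $\int_0^T\psi(|u_j'|)\,\dr$ is the main obstacle. I would proceed as follows.
\begin{itemize}
\item Pass to a subsequence realizing the liminf of the quantity under consideration, and assume that liminf is finite.
\item Use the lower bound $\psi(t)\ge c_\psi|t|^p-1/c_\psi$ from \eqref{eq:A4}, together with the fact that $\phi(u_j(T))\ge 0$ and $\psi^*\ge 0$, to bound $\sup_j\int_0^T|u_j'|^p\,\dr$. (For $\GG$ we may assume $\EE(\alpha_j,u_j)=0$; for $\HH$ the positive part $(\FF)^+$ is bounded.)
\item Extract a further subsequence with $|u_j'|\rightharpoonup\tilde\eta$ weakly in $L^p(0,T)$.
\item Pass to the limit in $d(u_j(s),u_j(t))\le\int_s^t|u_j'|\,\dr$ at almost all $s,t$ where $u_j\to u$, and extend by continuity of $u$. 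This gives $d(u(s),u(t))\le\int_s^t\tilde\eta\,\dr$ for all $s\le t$, hence $|u'|\le\tilde\eta$ almost everywhere by minimality of the metric derivative.
\item Since $\psi$ is convex, nondecreasing on $[0,\infty)$ and continuous, conclude
\[
\int_0^T\psi(|u'|)\,\dr\le\int_0^T\psi(\tilde\eta)\,\dr\le\liminf_j\int_0^T\psi(|u_j'|)\,\dr .
\]
\end{itemize}
Summing the four terms, and noting that the liminf of a sum dominates the sum of the liminfs, gives $\FF(\alpha,u)\le\liminf_j\FF(\alpha_j,u_j)$ along the subsequence.

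For $\GG$: if $\liminf_j\GG(\alpha_j,u_j)=\infty$ there is nothing to prove. Otherwise, along a subsequence realizing the liminf, $\EE(\alpha_j,u_j)=0$. Proposition \ref{prop:BBlsc} then gives $0\le\EE(\alpha,u)\le 0$, so $\GG(\alpha,u)=\FF(\alpha,u)\le\liminf_j\FF(\alpha_j,u_j)$.

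For $\HH$: the map $x\mapsto x^+$ is continuous and nondecreasing, so $(\FF(\alpha,u))^+\le\liminf_j(\FF(\alpha_j,u_j))^+$. Adding the liminf inequality for $\EE$ then yields the claim for $\HH$.
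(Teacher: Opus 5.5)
Your proof is correct and takes the same route as the paper: lower semicontinuity of $\EE$ from Proposition~\ref{prop:BBlsc}, combined with termwise lower semicontinuity of $\FF$. You also supply two details the paper leaves implicit, namely the $L^p$ bound on $|u_j'|$ (obtained from the boundedness of $\FF$ along a liminf-realizing subsequence) that is needed for the dissipation term, and a separate treatment of $\HH$, for which $\liminf_j \EE(\alpha_j,u_j)=0$ need not hold.
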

\begin{proof}
Owing to Proposition \ref{prop:BBlsc} we have
that
$$0\leq \EE (\alpha,u) \leq \liminf_{j \to \infty}\EE(\alpha_j ,
u_j) =0.$$
This ensures that $\sl(u) \in L^{p'}(0,T)$ as $ \sl(u)\leq \alpha$ almost
everywhere in $[0,T]$  by \eqref{eq:max}.
 Hence, $(\alpha,u)\in
K(u^0)$ and we can hence pass to the $\liminf$ in
$\FF(\alpha_j,u_j)$ as $j
\to \infty$ and find
\begin{align}
  &\FF(\alpha,u)  =  \phi(u(T))-\phi(u^0) +   \int_0^T \psi(|u'|(r))\,
    \dr +  \int_0^T \psi^*(\alpha(r))\, \dr \nonumber\\
  &\quad \leq \liminf_{j\to \infty}\FF(\alpha_j,u_j)\label{eq:lscdopo}
\end{align}
whence the  lower semicontinuity of $\GG$  and $\HH$  follows.
\end{proof}

We now   prove that  $
\GG$ and $\HH$ can actually be minimized on  $K(u^0)$.  The existence
of minimizers  obviously requires that  these functionals are  proper on $K(u^0)$, see condition~\eqref{eq:assB1}.  

\begin{proposition}[Minimization of $\GG$ and
  $\HH$] \label{prop:min}  Assume that
  \begin{equation}\label{eq:assB1}
    \inf_{K(u^0)}\GG <\infty.
\end{equation}
Then, $\GG$ admits a minimizer in $K(u^0)$. Any
minimizer $(\alpha,u)$ of $\GG$  in $K(u^0)$
 is such that $\alpha = \sl (u)$ almost everywhere in
 $[0,T]$.
 If condition \eqref{eq:assB1} holds for $\HH$ in place of $\GG$, then $\HH$ admits a minimizer in $K(u^0)$.
\end{proposition}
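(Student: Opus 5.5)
We argue by the Direct Method on $K(u^0)$, using Lemma \ref{lemma:compactness} for compactness and Corollary \ref{prop:FFlsc} for lower semicontinuity. Take a minimizing sequence $(\alpha_j,u_j)_j\subset K(u^0)$ for $\GG$. Since $\inf\GG<\infty$, we may assume $\EE(\alpha_j,u_j)=0$ and $\FF(\alpha_j,u_j)\le C$.

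\textbf{Coercivity of $\GG$.} By \eqref{eq:max} we have $\sl(u_j)\le\alpha_j$ a.e., and $\sl$ is a strong upper gradient. Arguing as in \eqref{eq:q1} on $[0,t]$ and $[t,T]$, for every $t\in[0,T]$ we get
$$\phi(u_j(t))\le \phi(u^0)+\int_0^t\big(\psi(|u_j'|)+\psi^*(\alpha_j)\big)\,\dr .$$
The complementary inequality on $[t,T]$ gives $\phi(u_j(T))-\phi(u_j(t))\ge -\int_t^T(\psi+\psi^*)$. Combining these with $\FF(\alpha_j,u_j)\le C$ and $\phi\ge0$, the full integral $\int_0^T(\psi(|u_j'|)+\psi^*(\alpha_j))\,\dr$ is bounded by $C+\phi(u^0)$. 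By \eqref{eq:A4} this bounds $\|u_j'\|_{L^p}$ and $\|\alpha_j\|_{L^{p'}}$, and the displayed inequality then gives $\sup_j\sup_t\phi(u_j(t))<\infty$.

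\textbf{Compactness and passage to the limit.} Extract $|u_j'|\rightharpoonup\eta$ in $L^p$ and $\alpha_j\rightharpoonup\alpha$ in $L^{p'}$. Lemma \ref{lemma:compactness}(a) gives $u\in AC^p$ with $u_j(t)\to u(t)$ for every $t$. In particular $u(0)=u^0$ and $u_j(T)\to u(T)$, and $\eta_j=|u_j'|$ converges weakly in $L^1$. Corollary \ref{prop:FFlsc} then yields $\GG(\alpha,u)\le\liminf\GG(\alpha_j,u_j)=\inf\GG$, so $(\alpha,u)$ is a minimizer.

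\textbf{The case of $\HH$.} This is the main obstacle, because $\EE(\alpha_j,u_j)$ need not vanish and $\alpha_j$ no longer dominates $\sl(u_j)$. The key is the pointwise inequality $E(\alpha,u)\ge\phi(u)-\phi(u_*)-\alpha d(u,u_*)-\mu_0d^q(u,u_*)$, obtained by testing with $v=u_*$. Its contrapositive bounds $\phi(u_j)$ by $E+\alpha_j d+\mu_0d^q$, but this is circular unless $d(u_j,u_*)$ is controlled. I would first try a cruder route:
\begin{itemize}
\item[(1)] From $\HH\le C$ get $\FF\le C$ and $\EE\le C$.
\item[(2)] Since $\phi\ge0$, obtain $\int_0^T(\psi(|u'_j|)+\psi^*(\alpha_j))\,\dr\le C+\phi(u^0)$.
\item[(3)] Deduce $L^p$ and $L^{p'}$ bounds, hence $d(u_j(t),u^0)\le C$ uniformly in $t$.
\item[(4)] Integrate the test inequality: $\int_0^T\phi(u_j)\le \EE+\int_0^T\big(\alpha_j d(u_j,u_*)+\mu_0d^q(u_j,u_*)\big)\le C$.
\end{itemize}
With $u_j(0)=u^0$ fixed, Lemma \ref{lemma:compactness}(b) with $t_0=0$ gives a.e.\ convergence. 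The endpoint $u_j(T)$ is precompact because $\phi(u_j(T))\le C+\phi(u^0)$ by $\FF\le C$. After extracting further, $u_j(T)\to\bar u$, and the argument following Lemma \ref{lemma:compactness} identifies $\bar u=u(T)$. For the limit, $\EE$ is lower semicontinuous by Proposition \ref{prop:BBlsc}. For $\FF$, the term $\phi(u(T))$ is lower semicontinuous via the endpoint convergence, and the convex integrands are weakly lower semicontinuous with $\int\psi(|u'|)\le\liminf\int\psi(|u_j'|)$ since $|u'|\le\eta$. Since $x\mapsto x^+$ is monotone and continuous, $\HH$ is lower semicontinuous along the sequence, and we conclude that $\HH$ admits a minimizer.

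\textbf{Identification $\alpha=\sl(u)$.} Let $(\alpha,u)$ minimize $\GG$. Then $\EE(\alpha,u)=0$ gives $\sl(u)\le\alpha$, and $(\sl(u),u)\in K(u^0)$ with $\EE(\sl(u),u)=0$ by \eqref{eq:max}. Minimality gives $\FF(\alpha,u)\le\FF(\sl(u),u)$, that is, $\int_0^T\psi^*(\alpha)\le\int_0^T\psi^*(\sl(u))$. Since also $\psi^*(\sl(u))\le\psi^*(\alpha)$ pointwise, equality holds a.e., and strict monotonicity of $\psi^*$ gives $\alpha=\sl(u)$ a.e.
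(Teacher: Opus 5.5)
Your proposal is correct and follows essentially the same route as the paper. For $\GG$, the bound $\sup_{j,t}\phi(u_j(t))<\infty$ comes from $\sl(u_j)\le\alpha_j$ and the strong upper gradient, and Lemma \ref{lemma:compactness}(a) gives compactness. For $\HH$, testing $E$ with $v=u_*$ bounds $\int_0^T\phi(u_j)\,\d t$, and Lemma \ref{lemma:compactness}(b) is used at $t_0=0$ and $t_0=T$. The identification $\alpha=\sl(u)$ comes from the strict monotonicity of $\psi^*$. The differences are only cosmetic: you control $d(u_j(t),u_*)$ directly through the uniform bound on $d(u_j(t),u^0)$ rather than via Young's inequality, and you verify the lower semicontinuity of $\HH$ by hand instead of citing Corollary \ref{prop:FFlsc}.
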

\begin{proof}
Under
condition \eqref{eq:assB1}, let $(\alpha_j,u_j)_j\in K(u^0)$ be a minimizing sequence. With no loss of generality one can assume that $
\GG(\alpha_j , u_j) \leq  C$  for some  $C \in (0, \infty)$.  This in
particular entails that
\begin{equation}
  \phi(u_j(T)) +  \int_0^T \psi(|u_j'| (r))\, \dr +  \int_0^T
\psi^*(\alpha_j(r))\, \dr \leq  C  +
\phi(u^0). \label{eq:lo1}
  \end{equation}
As $\EE(\alpha_j,u_j)=0$, one has that $u_j \in D(\sl)$ and $\sl (u_j) \leq \alpha_j$
almost everywhere in $[0,T]$. This gives
  for all $t \in [0,T] $ that
  \begin{align*}
    &\phi(u_j(t)) \leq \phi(u^0) + |\phi(u_j(t)) - \phi(u^0)| \leq \phi(u^0) + 
      \int_0^t \sl(u_j(r))\, |u_j'|(r)\, \dr \\
      &\quad\leq \phi(u^0) +  \int_0^T \alpha_j(r)\, |u_j'|(r)\,
      \dr \\
    &\quad \leq \phi(u^0) + \int_0^T \psi(|u_j'|(r))\,\dr + 
      \int_0^T \psi^*(\alpha_j(r))\, \dr \stackrel{\eqref{eq:lo1}}{\leq}
       C   + 2\phi(u^0).
  \end{align*}
   Using \eqref{eq:lo1} and \eqref{eq:A4}, we can apply Lemma \ref{lemma:compactness}(a)  and extract not relabeled
 subsequences such that $u_j (t) \to u(t)$ for
all $t \in [0,T]$, as well as  $\alpha_j \to \alpha$ weakly in
$L^{p'}(0,T)$ and $|u_j'|\to \eta$ weakly in $L^p(0,T)$  for some $u \in AC^p([0,T];U)$ with $|u'| \le \eta$.  Proposition \ref{prop:FFlsc} ensures that
$(\alpha,u)\in K(u^0)$ minimizes $\GG $.
  
  Given any minimizer $(\alpha,u)\in K(u^0)$ of $\GG$, from
  $\EE(\alpha,u)=0$ one has that $u \in D(\sl)$ and $\sl
  (u)\leq \alpha$ almost everywhere in $[0,T]$. This implies that
  $\GG (\sl(u),u) \leq \GG(\alpha,u)$, the inequality being
  strict if $\sl(u)\not =\alpha$ on a nonnegligible subset of $[0,T]$
   due to the strict monotonicity of $\psi$.   In
  particular, $ \sl(u)=\alpha$ almost everywhere in $[0,T]$.

We now turn our attention to $\HH$. Under condition
  \eqref{eq:assB1}, now written for $\HH$ instead of $\GG$,  the bound in \eqref{eq:lo1} still holds and we also have, using the $2^{q-1}$-subadditivity  of  $|\cdot|^q$,  
 \begin{align*}
  &\phi(u_j(T)) +  \int_0^T \psi(|u_j'| (r))\, \dr +  \int_0^T
  \psi^*(\alpha_j(r))\, \dr + \int_0^T \phi(u_j(t))\, \d t \\
  &\quad \leq  C  +\int_0^T \alpha_j(t) d (u_j(t),u_*)\, \d t +
\phi(u^0)  +  \mu_0  \int_0^T d^q(u_j(t), u_*)\, \d t \\
& \quad  \le C+ \frac{1}{p'}\int_0^T|\alpha_j|^{p'}(t)\, \d t+ \frac1p\int_0^T
     d^p(u_j(t),u^0) \, \d t + \phi(u^0)\\
     &\qquad +  { 2^{q-1} \mu_0 } \int_0^T d^q(u_j(t),
       u^0) \, \d t + { 2^{q-1} \mu_0   T} d^q(u_j(t), u_*),
 \end{align*}
 where $u_* \in U$ is such that $\phi(u_*)=0$. Moreover, for every $\sigma>1$, $t \in [0,T]$ and $j \in \Nz$, we have
\[
d^\sigma(u_j(t), u^0) \le \left ( \int_0^t |u_j'|(r) \, \d r \right )^\sigma \le T^{\sigma/p'}\left ( \int_0^T |u_j'|^p(r) \, \d r \right )^{\sigma/p} 
\]
which is uniformly bounded due to \eqref{eq:lo1} and \eqref{eq:A4}. We can thus apply Lemma \ref{lemma:compactness}(b) twice, once with $t_0=0$ and once with $t_0=T$, and extract not relabeled subsequences such that $u_j (t) \to u(t)$ for
a.e. $t \in [0,T]$, $u(0)=u^0$, $u_j(T) \to u(T)$, as well as  $\alpha_j \to \alpha$ weakly in
$L^{p'}(0,T)$ and $|u_j'|\to \eta$ weakly in $L^p(0,T)$ for some $u \in AC^p([0,T];U)$ with $|u'| \le \eta$.  Proposition \ref{prop:FFlsc} ensures that
$(\alpha,u)\in K(u^0)$ minimizes $\HH $.  
\end{proof}

\begin{remark}[Validity of condition \eqref{eq:assB1}]
   Let us now comment on condition \eqref{eq:assB1}. This may of
  course  be met,  as curves of maximal slope $\ove u$ do exist in this setting,  see \cite{cuore3}.  Specifically, we have that
  $\GG (\sl(\ove u),\ove u) = \HH (\sl(\ove u),\ove u)=0$  by Theorem \ref{prop:BEN}  and
   \eqref{eq:assB1} trivially holds true  for both $\GG$ and
  $\HH$. 

  Not relying on the existence of curves of maximal slope,
  \eqref{eq:assB1} also holds in the regular case $u^0 \in D(\sl)$ by
  simply choosing $(\ove \alpha, \ove u)=(\sl (u^0),u^0)$ so that
  $\GG (\sl (u^0),u^0) = \HH(\sl (u^0),u^0) =T
  \psi^*(\sl(u^0))<\infty$. More generally, it suffices to find any
  curve $u \in AC^p([0,T];U)$ with $u(0)=u^0$ and
  $\sl (u)\in L^{p'}(0,T)$. This can be done by resorting to the
  homogeneous case $\psi(t)=|t|^p/p$. Indeed, existence of curves of
  maximal slope for $\psi(t)=|t|^p/p$ under assumptions
  \eqref{eq:A1}--\eqref{eq:A3} is known \cite[Rem.~3.2.5,
  p.~69]{Ambrosio08}.  Letting $\ove u$ be one of such curves, one has
  that $(\sl(\ove u),\ove u)\in K(u^0)$ and
  \begin{align*}
    &\GG (\sl(\ove u),\ove u) = \phi(\ove u(T)) - \phi(u^0) +
      \int_0^T\psi(|\ove u'|(r))\, \dr + \int_0^T
      \psi^*(\sl(\ove u(r)))\, \d r \\
    & \quad \leq \phi(\ove u(T)) - \phi(u^0) +
      \int_0^T C_\psi \big(  |\ove u'|^p(r) + \sl^{p'}(\ove u(r)) +1\big)\, \d
      r <\infty, \\
    & \HH (\sl(\ove u),\ove u) = \left(\phi(\ove u(T)) - \phi(u^0) +
      \int_0^T\psi(|\ove u'|(r))\, \dr + \int_0^T
      \psi^*(\sl(\ove u(r)))\, \d r\right)^+ \\
    & \quad  \leq \phi(\ove u(T)) +
      \int_0^T C_\psi \big(  |\ove u'|^p(r) + \sl^{p'}(\ove u(r)) +1\big)\, \d
      r <\infty.
  \end{align*}
\end{remark}
\begin{remark}[Null-minimization]
   Checking
  $\min_{K(u^0)}\GG(\alpha,u) = \min_{K(u^0)}\HH(\alpha,u) = 0$
  without resorting to the existence of curves of maximal slope 
  is
  more demanding. One is asked to find trajectories $u_\epsi$ which
  are arbitrarily close to curves of maximal slope, in the precise
quantitative sense given by
  $\GG ( \alpha_\epsi, u_\epsi)<\epsi$  or
  $ \HH ( \alpha_\epsi, u_\epsi)<\epsi$, for some $\alpha_\epsi \in
  L^{p'}(0,T)$. We are not in the
  position of presenting a general argument to this
  effect. Nonetheless, we expect that this could be achieved by
  considering some interpolation of discrete
  solutions of a suitable minimizing-movement scheme for time
  partitions of small diameter \cite{cuore3}. 
\end{remark}
 
 We conclude by showing the possibility of finding a curve of maximal
 slope by a penalization approach. 

 \begin{proposition}[Penalization via $\GG_\delta$]\label{prop:pen2}
   For all $\delta>0$, let $(\alpha_\delta,u_\delta)\in K(u^0)$
   minimize $\GG_\delta$ and assume that
   \begin{equation}\label{eq:inf}
    \inf_{K(u^0)} \GG_\delta \to  0  \ \ \text{ as } \ \ \delta \to 0.
\end{equation}
Then, we have that $\alpha_\delta \to \alpha$ weakly in $L^{p'}(0,T)$, $u_\delta
\to u$ almost everywhere in $[0,T]$,  and $|u_\delta'| \to \eta$ weakly in
$L^p(0,T)$, up to a not relabeled subequence, where $u$ is a curve of maximal slope.  
\end{proposition}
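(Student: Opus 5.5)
Since $\GG_\delta(\alpha_\delta,u_\delta)=\inf_{K(u^0)}\GG_\delta\to 0$, for $\delta$ small we have $\GG_\delta(\alpha_\delta,u_\delta)\le 1$. As $\EE\ge 0$ and $\phi\ge 0$, this gives
\[
\int_0^T\psi(|u_\delta'|)\,\dr+\int_0^T\psi^*(\alpha_\delta)\,\dr+\frac1\delta\EE(\alpha_\delta,u_\delta)\le 1+\phi(u^0).
\]
By \eqref{eq:A4}, $|u_\delta'|$ is bounded in $L^p(0,T)$ and $\alpha_\delta$ in $L^{p'}(0,T)$. Moreover $\EE(\alpha_\delta,u_\delta)\le\delta(1+\phi(u^0))\to0$. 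Extracting subsequences, $\alpha_\delta\to\alpha$ weakly in $L^{p'}$ and $|u_\delta'|\to\eta$ weakly in $L^p$.

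For compactness of $u_\delta$, I would argue as in the $\HH$ part of Proposition \ref{prop:min}. Testing $E(\alpha_\delta,u_\delta)$ with $v=u_*$ gives
\[
\phi(u_\delta(t))\le E(\alpha_\delta(t),u_\delta(t))+\alpha_\delta(t) d(u_\delta(t),u_*)+\mu_0 d^q(u_\delta(t),u_*).
\]
Here $d(u_\delta(t),u_*)\le d(u^0,u_*)+T^{1/p'}\||u_\delta'|\|_{L^p}$ is uniformly bounded, so $\int_0^T\phi(u_\delta)\,\dr$ is uniformly bounded. Lemma \ref{lemma:compactness}(b) with $t_0=0$ then yields $u\in AC^p([0,T];U)$ with $|u'|\le\eta$, $u(0)=u^0$, and $u_\delta\to u$ a.e. To obtain the limit at $t_0=T$ as well, note that $\phi(u_\delta(T))\le 1+\phi(u^0)$ from the bound on $\GG_\delta$ (the other terms of $\FF$ are nonnegative). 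Compact sublevels then make $(u_\delta(T))$ precompact, and a second application of Lemma \ref{lemma:compactness}(b) gives $u_\delta(T)\to u(T)$.

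For identification, Proposition \ref{prop:BBlsc} (with $\eta_\delta=|u_\delta'|$) gives $\EE(\alpha,u)\le\liminf\EE(\alpha_\delta,u_\delta)=0$. Hence $\sl(u)\le\alpha$ a.e. by \eqref{eq:max}, so $(\alpha,u)\in K(u^0)$. The lower semicontinuity argument \eqref{eq:lscdopo} (lsc of $\phi$ at $T$, weak lsc of the convex integrals, together with $|u'|\le\eta$ and the monotonicity of $\psi$) gives
\[
\FF(\alpha,u)\le\liminf\FF(\alpha_\delta,u_\delta)\le\liminf\GG_\delta(\alpha_\delta,u_\delta)=0,
\]
using $\EE\ge0$. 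Thus $\GG(\alpha,u)=\FF(\alpha,u)\le0$. Since $\GG\ge0$ on $K(u^0)$, as shown before Theorem \ref{prop:BEN}, it follows that $\GG(\alpha,u)=0=\min\GG$. Theorem \ref{prop:BEN} (ii)$\Rightarrow$(i) then shows that $u$ is a curve of maximal slope.

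The main obstacle is compactness. Unlike the $\GG$ case, we lack $\EE=0$ along the sequence, so the slope/chain-rule bound on $\sup_t\phi(u_\delta(t))$ is not available. This forces the Aubin--Lions route via an integral bound on $\phi(u_\delta)$ extracted from $E$, together with separate control of the endpoint $T$, which is needed for lower semicontinuity of $\phi(u(T))$.
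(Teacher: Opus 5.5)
Your proof is correct and follows the paper's argument essentially step by step. The common steps are the a priori bounds from $\GG_\delta(\alpha_\delta,u_\delta)\to 0$, the integral bound on $\phi(u_\delta)$ obtained by testing $E$ with $u_*$, Lemma~\ref{lemma:compactness}(b) applied at $t_0=0$ and $t_0=T$, lower semicontinuity of $\EE$ and $\FF$, and the conclusion via Theorem~\ref{prop:BEN}; your two variations are minor and slightly cleaner: a fixed bound in place of the paper's $\epsi$, and an explicit appeal to $\GG\ge 0$ on $K(u^0)$ to obtain $\GG(\alpha,u)=\min_{K(u^0)}\GG=0$.
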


\begin{proof}
 From minimality and \eqref{eq:inf}, for all $\epsi >0 $ we have that $\GG_\delta
 (\alpha_\delta,u_\delta)  <\epsi$ for $\delta$ small enough,
  which implies that
  \begin{align}
     0 &\le \FF(\alpha_\delta, u_\delta)-\phi(u^0)  \nonumber\\
    \quad &\leq \phi(u_\delta(T)) + \int_0^T \psi(|u_\delta'|(t))\, \d t +
    \int_0^T\psi^*(\alpha_\delta(t))\, \d t \leq \phi(u^0) +\epsi. \label{eq:pen}
  \end{align}
  At the same time, we have that $ \EE(\alpha_\delta,u_\delta)/\delta=  \GG_\delta
 (\alpha_\delta,u_\delta) -\FF  
 (\alpha_\delta,u_\delta) \leq \epsi +\phi(u^0)$ ensuring that
 \begin{align*}
   & \int_0^T \phi(u_\delta(t))\, \d t \leq \int_0^T \alpha_\delta(t)
     d(u_\delta(t),u_*)\, \d t + \delta \epsi +\delta \phi(u^0) +
     { \mu_0  }\int_0^T d^q(u_j(t), u_*) \, \d t, 
 \end{align*} 
  where $u_* \in U$ is such that $\phi(u_*)=0$. 
 By using the bounds in \eqref{eq:pen} and arguing as in
  the proof of Proposition \ref{prop:min},  we have that the above right-hand side is bounded independently of $\delta$ as
 $\delta \to 0$.  We can thus apply Lemma \ref{lemma:compactness}(b) twice, once with $t_0=0$ and once with $t_0=T$, and extract not relabeled subsequences such that $u_j (t) \to u(t)$ for
a.e. $t \in [0,T]$, $u(0)=u^0$, $u_j(T) \to u(T)$, as well as  $\alpha_j \to \alpha$ weakly in
$L^{p'}(0,T)$ and $|u_j'|\to \eta$ weakly in $L^p(0,T)$ for some $u
\in AC^p([0,T];U)$ with $|u'| \le \eta$  almost everywhere in $[0,T]$. 
 Thus  $\FF(\alpha,u) \leq \liminf_{\delta \to
  0}\FF(\alpha_\delta,u_\delta)$ and  $\EE(\alpha,u) \leq \liminf_{\delta \to
  0}\EE (\alpha_\delta,u_\delta)$. This implies that $$0\leq \EE(\alpha,u)  \leq \liminf_{\delta \to
  0} \delta(\epsi+\phi(u^0)) =0.$$
We hence conclude that $\GG(\alpha,u) =\FF(\alpha,u)  \leq \liminf_{\delta \to
  0} \GG_\delta(\alpha_\delta,u_\delta)<\epsi$ and the assertion
follows by taking $\epsi \to 0$  and using Theorem \ref{prop:BEN}.
\end{proof}

 Before closing this section, let us note that in fact $\GG_\delta \to \GG$ in the $\Gamma$-convergence sense with respect to the topology specified in Corollary \ref{prop:FFlsc}. Indeed, for all $(\alpha_\delta,u_\delta)_\delta$ with $\alpha_\delta\to \alpha $ weakly in $L^{p'}(0,T)$, $u_\delta(t)\to u(t)$ for almost all $t \in [0,T]$ with $u(0)=u^0$, $d(u_\delta(t),u_\delta(s)) \leq \int_s^t \eta_\delta(r)\, \d r $ for all $0\leq s <t \leq T$
with $\eta_\delta \to \eta$ weakly in $L^p(0,T)$, and $u_\delta(T)\to u(T)$, we have that $\FF(\alpha,u)\leq \liminf_{\delta\to 0}\FF(\alpha_\delta,u_\delta)$. By assuming with no loss of generality that $\sup_j\GG_\delta(\alpha_\delta,u_\delta)<\infty$, we readily get that $\EE(\alpha,u)=0$. Hence, $\GG(\alpha,u)\leq\liminf_{\delta\to 0}\GG_\delta(\alpha_\delta,u_\delta)$. On the other hand, $\GG_\delta$ converges to $\GG$ pointwise, which directly entails the existence of recovery sequences.

\section{An extension  
   without assumption  \eqref{eq:A20}}\label{sec:beyond}

This last section is devoted to an extension of the variational
characterization of Proposition \ref{prop:char}, and hence of the
Brezis--Ekeland--Nayroles principle of  Theorem \ref{prop:BEN},
not relying on the validity of  assumption \eqref{eq:A20} concerning the non-asymptotic formulation of the local slope.

For the sake of notational convenience, within this section we still
indicate by $\mu_0 \in [0,\infty)$ the smallest $\mu_0$ such that
$\sl=\sl_{\mu_0}$ if such a value exists, and we set $\mu_0=\infty$ 
otherwise. Let
us recall that $\sl$ is a strong upper gradient for $\phi$ if
$\mu_0<\infty$. In case $\mu_0 =\infty$, this is to be additionally
assumed as it is needed in the proof of the
Brezis--Ekeland--Nayroles principle. Henceforth, in addition to \eqref{eq:A1}--\eqref{eq:A_min0} and
\eqref{eq:A3}--\eqref{eq:A4}, we ask that  
\begin{equation}
  \label{eq:A200}
 \ \sl \ \text{is a strong upper gradient
    for $\phi$}.
\end{equation}
Note again that the latter is weaker than \eqref{eq:A20}, which is not
assumed here.

The first step to extend the results of the previous sections  is to obtain an alternative
variational representation of the local slope $\sl$  covering the
case $\mu_0=\infty$, as well.  
 Let us start by proving the following properties of the
$\mu$-slope $\sl_\mu$ defined in \eqref{eq:slmu}.
\begin{proposition}[Properties of the $\mu$-slope]
  For all  $u\in U$ one has that
  \begin{align}
    &\text{the map} \ \mu\in [0,\infty) \mapsto \sl_\mu(u) \ \text{is nonincreasing},\label{eq:mu0}\\[2mm]
    &\sl(u) \leq \sl_\mu(u)
  \leq  \sl_0(u)=  \gl(u),
      \label{eq:mu1}\\[2mm]
    &\lim_{\mu\to \infty}\sl_\mu(u) = \sl(u), \label{eq:mu2}\\
    &\lim_{\mu\to 0}\sl_\mu(u) =  \sl_0(u)=  \gl(u). \label{eq:mu3}
  \end{align}
In case $\mu_0 <\infty$ we have that $\sl  = \sl_\mu
$ for all $\mu\geq \mu_0$.
\end{proposition}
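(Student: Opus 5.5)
The plan is to work directly from definition \eqref{eq:slmu}, writing for $u\in U$ and $v\neq u$
\[
f_\mu(v)\coloneqq \frac{(\phi(u)-\phi(v))^+}{d(u,v)}-\mu d^{q-1}(u,v),
\]
so that $\sl_\mu(u)=\sup_{v\neq u} f_\mu(v)$. For each fixed $v$ the map $\mu\mapsto f_\mu(v)$ is nonincreasing, and a supremum of nonincreasing maps is nonincreasing; this gives \eqref{eq:mu0}. In particular $\sl_\mu(u)\le \sl_0(u)$, and $\sl_0=\gl$ holds by definition. For the lower bound in \eqref{eq:mu1}, I take $v_n\to u$ with $(\phi(u)-\phi(v_n))^+/d(u,v_n)\to \sl(u)$. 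Since $q>1$, we have $\mu d^{q-1}(u,v_n)\to 0$, so $\sl_\mu(u)\ge \limsup_n f_\mu(v_n)=\sl(u)$. If $u\notin D(\phi)$, the usual conventions make everything $\infty$, and this case should be handled by a separate remark.

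For \eqref{eq:mu3}, I use monotonicity: $\lim_{\mu\to0}\sl_\mu(u)$ exists and is bounded above by $\gl(u)$. Conversely, for any fixed $v$ we have $\sl_\mu(u)\ge f_\mu(v)\to f_0(v)$, so the limit is at least $\sup_v f_0(v)=\gl(u)$. This is just the statement that a supremum of functions continuous in $\mu$ is lower semicontinuous in $\mu$, combined with monotonicity.

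The main step is \eqref{eq:mu2}. The limit $L\coloneqq\lim_{\mu\to\infty}\sl_\mu(u)$ exists by monotonicity and satisfies $L\ge \sl(u)$ by \eqref{eq:mu1}. It remains to show $L\le \sl(u)$, and I may assume $\sl(u)<\infty$ and $L>0$. I fix $\epsi>0$ and choose $r>0$ such that $(\phi(u)-\phi(v))^+/d(u,v)\le \sl(u)+\epsi$ whenever $0<d(u,v)<r$; this is possible by the definition of the $\limsup$. For $d(u,v)\ge r$ I need $f_\mu(v)\le \sl(u)+\epsi$ for large $\mu$. This requires a bound on $(\phi(u)-\phi(v))^+$. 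Since $\phi\ge 0$ by \eqref{eq:A_min0}, we have $(\phi(u)-\phi(v))^+\le \phi(u)$, which is finite when $u\in D(\phi)$. Hence for $d(u,v)\ge r$,
\[
f_\mu(v)\le \frac{\phi(u)}{r}-\mu r^{q-1},
\]
which is negative for $\mu$ large. Therefore $\sl_\mu(u)\le \max\{\sl(u)+\epsi,0\}$ for large $\mu$, and letting $\epsi\to 0$ gives $L\le\sl(u)$. The obstacle is exactly this uniform control away from $u$; it is resolved by the lower bound on $\phi$, which the paper normalizes in \eqref{eq:A_min0}. If $u\notin D(\phi)$, then all the slopes are $+\infty$ by convention and there is nothing to prove.

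Finally, suppose $\mu_0<\infty$, so that $\sl=\sl_{\mu_0}$. For $\mu\ge\mu_0$, the monotonicity \eqref{eq:mu0} gives $\sl_\mu(u)\le \sl_{\mu_0}(u)=\sl(u)$, while \eqref{eq:mu1} gives $\sl(u)\le \sl_\mu(u)$. Hence $\sl=\sl_\mu$ for all $\mu\ge\mu_0$.
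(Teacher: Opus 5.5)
Your proposal is correct and follows the paper's proof essentially step by step. You get monotonicity from the definition and the lower bound from $\mu d^{q-1}(u,v)\to 0$ as $v\to u$. For the limit as $\mu\to\infty$ you split into a small punctured ball and its complement, where $(\phi(u)-\phi(v))^+\le\phi(u)$ makes the quotient negative for large $\mu$. For the limit as $\mu\to 0$ you use monotonicity plus pointwise convergence in $\mu$; the paper phrases this last step with a maximizing sequence $v_k$, which is the same argument.
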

\begin{proof}
  The monotonicity  \eqref{eq:mu0}  of the map $\mu \mapsto \sl_\mu(u)$ readily follows
  from the definition of the $\mu$-slope.
  
  For all $v\in U$  with $v \ne u$  one has that
  $$\frac{(\phi(u) -
    \phi(v))^+}{d(u,v)} -  {\mu}d^{q-1}(u,v) \leq \sl_\mu(u)
  \leq  \sl_0(u)= \gl(u).$$
  Taking the $\limsup$ as $v\to u$ proves \eqref{eq:mu1}.

  Fix $u \in D(\sl)$.   Since   $\mu:(0,\infty)\mapsto
  \sl_\mu(u)\in [0,\infty]$  is nonincreasing, the  
  limits in \eqref{eq:mu2}--\eqref{eq:mu3} exist. For all $\epsi>0$ we can find $r>0$ such that 
$$\frac{(\phi(u) -
    \phi(v))^+}{d(u,v)} \leq \sl(u)+\epsi \quad \forall v \in B_r\coloneqq\{w\in U :  0 <  d(u,w)<r\} .$$
  Note that $r$ is independent of $\mu$. We hence have that 
  $$\frac{(\phi(u) -
    \phi(v))^+}{d(u,v)} -  {\mu}d^{q-1}(u,v) \leq \sl(u)+\epsi
  \quad \forall v \in B_r .$$
  On the other hand
   \begin{align*}
       & \frac{(\phi(u) -
    \phi(v))^+}{d(u,v)} -  {\mu}d^{q-1}(u,v) \leq \frac{(\phi(u) -
    \phi(v))^+}{r} -  {\mu}r^{q-1}  \\
    &\qquad \le \frac{\phi(u)}{r} - \mu r^{q-1}  \quad \forall v \in
  U\setminus B_r.
  \end{align*}
  As $\lim_{\mu \to \infty}(-\mu r^{q-1}) = -\infty$,  for $\mu$ large enough,  the supremum of the
  above left-hand side is taken in $B_r$. This in particular proves that  
  $ \lim_{\mu \to \infty} \sl_\mu(u) \leq \sl(u)+\epsi$ 
  and \eqref{eq:mu2} follows as $\epsi$ is arbitrary.
  
 Let $u \in U$ and $(v_k)_k \subset U$ with $v_k \ne u$ be such that
\[
\gl(u) = \lim_{k \to \infty} \frac{(\phi(u) -
    \phi(v_k))^+}{d(u,v_k)}.
\]
Thus, for every $\mu \in (0,\infty)$, we have
\begin{align*}
\frac{(\phi(u) -  \phi(v_k))^+}{d(u,v_k)} &= \frac{(\phi(u) -  \phi(v_k))^+}{d(u,v_k)} - \mu d^{q-1}(u,v_k) + \mu d^{q-1} (u,v_k)
  \\
  &\ \leq \sl_\mu(u) + \mu d^{q-1} (u,v_k).
\end{align*}
Passing first to the limit as $\mu \to 0$ and then as $k \to \infty$, we get $\gl (u) \le \lim_{\mu \to 0} \sl_{\mu}(u)$.

  In case $\mu_0 <\infty$ one has that $\sl_{\mu_0}(u) = \sl(u)$. As
  $\mu \mapsto \sl_\mu(u)$ is nonincreasing by \eqref{eq:mu0}, from
  \eqref{eq:mu1} we get
  $\sl(u) \leq \sl_\mu(u) \leq \sl_{\mu_0}(u) = \sl(u)$ for all $\mu
  \geq \mu_0$. 
\end{proof}

 We now  introduce a modification
$\tilde E$ of the functional $E$, for which the  analogue  of \eqref{eq:max} holds. 
Arguing as in Lemma \ref{lemma:v}, we define the functionals $\ove E_{ \mu}: [0,\infty) \times U \times
D(\phi) \to (-\infty,\infty]$ and $E_\mu: [0,\infty) \times U \to
[0,\infty]$ as
\begin{align*}
  &\ove E_\mu(\alpha,u,v) = -\alpha d(u,v) + \phi(u)
  -\phi(v)- {\mu} d^q(u,v),\\
  &E_\mu(\alpha,u) = \max_{v\in D(\phi)} \ove E_\mu(\alpha,u,v).
\end{align*}
Note  that, if $u \in D(\phi)$  both $\alpha\in [0,\infty)  \mapsto E_\mu(\alpha,u)$ and
$\mu \in [0,\infty)  \mapsto  E_\mu(\alpha,u)$ are
nonincreasing, convex  and finite,  hence
continuous on  $(0,\infty)$.  Both functions are instead identically equal to $\infty$ if $u \notin D(\phi)$. 
Proposition \ref{prop:char}, now applied to $E_\mu$, ensures that
\begin{equation}\sl_\mu(u) \leq \alpha \ \Leftrightarrow \
E_\mu(\alpha,u)=0.\label{eq:emu}
\end{equation}

Set $h:[0,\infty) \to [0,\infty]$ to be lower semicontinuous and
nondecreasing, with $h(0)=0$ and $h(0+)>0$, a possible choice being the 
indicator
function of $0$, namely $h(0)=0$ and $h(r)=\infty$ for all
$r>0$. Using such $h$ we define
$\tilde E :[0,\infty) \times U \to
[0,\infty]$ as
\begin{equation}\label{eq:nuovo}\tilde E(\alpha,u) = \sup_{\epsi \in \Qz_+}
  \inf_{ \mu_0\geq \mu \in \Qz_+} h(E_\mu(\alpha+\epsi,u))
\end{equation}
where $\Qz_+ \coloneqq \Qz \cap (0,\infty)$.
We have the following variational characterization of the local slope.

 \begin{proposition}[Characterization 2]\label{prop:char2}  For every $(\alpha, u) \in [0,\infty) \times U$, we  have
   that
   \begin{equation}\sl(u) \leq \alpha \ \Leftrightarrow  \ \tilde
   E(\alpha,u)=0.\label{eq:char2}
   \end{equation}
 \end{proposition}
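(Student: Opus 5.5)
The plan is to reduce the condition $\tilde E(\alpha,u)=0$ to a statement about the $\mu$-slopes $\sl_\mu(u)$, using \eqref{eq:emu}, and then to pass between $\sl_\mu$ and $\sl$ using \eqref{eq:mu1}--\eqref{eq:mu2}.

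\textbf{Step 1: unwinding $\tilde E=0$.} Since $h$ is nondecreasing with $h(0)=0$ and $h(0+)>0$, we have $h(r)\geq h(0+)>0$ for every $r\in(0,\infty]$. For $u\in D(\phi)$ we also have $E_\mu(\beta,u)\geq 0$ for every $\beta\ge 0$. Hence an infimum $\inf_{\mu}h(E_\mu(\alpha+\epsi,u))$ can vanish only if some admissible $\mu$ satisfies $E_\mu(\alpha+\epsi,u)=0$; otherwise it is bounded below by $h(0+)>0$. The supremum over $\epsi$ vanishes iff every such infimum vanishes. Using \eqref{eq:emu}, we therefore obtain
\[
\tilde E(\alpha,u)=0 \iff \forall\,\epsi\in\Qz_+\ \exists\,\mu\in\Qz_+,\ \mu\le\mu_0,\ \text{with}\ \sl_\mu(u)\le\alpha+\epsi .
\]
For $u\notin D(\phi)$, all $E_\mu$ equal $\infty$, so $\tilde E(\alpha,u)>0$. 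Consistently, $\sl(u)\le\alpha$ fails with the natural convention that the slope is $\infty$ outside the domain. This case is dispatched separately.

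\textbf{Step 2: ($\Leftarrow$).} If $\tilde E(\alpha,u)=0$, then for each $\epsi$ we pick $\mu$ as in Step 1. By \eqref{eq:mu1} this gives $\sl(u)\le\sl_\mu(u)\le\alpha+\epsi$. Letting $\epsi\to0$ yields $\sl(u)\le\alpha$.

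\textbf{Step 3: ($\Rightarrow$), case $\mu_0=\infty$.} Assume $\sl(u)\le\alpha<\infty$ and fix $\epsi\in\Qz_+$. By \eqref{eq:mu2} and the monotonicity \eqref{eq:mu0}, we have $\sl_\mu(u)<\alpha+\epsi$ for all sufficiently large $\mu$. In particular this holds for some rational $\mu$, and then Step 1 gives $\tilde E(\alpha,u)=0$.

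\textbf{Step 4: ($\Rightarrow$), case $\mu_0<\infty$, the main obstacle.} Here the rational $\mu$ must satisfy $\mu\le\mu_0$, while $\mu_0$ itself may be irrational. We know that $\sl_{\mu_0}(u)=\sl(u)\le\alpha$, so we need left continuity of $\mu\mapsto\sl_\mu(u)$ at $\mu_0$. The argument has two parts.
\begin{itemize}
\item[(a)] Finiteness. For $u\in D(\sl)$ and every $\mu>0$, $\sl_\mu(u)<\infty$. Indeed, in the definition \eqref{eq:slmu}, points $v$ with $d(u,v)<r$ contribute at most $\sl(u)+1$ for $r$ small. Points with $d(u,v)\ge r$ contribute at most $\phi(u)/r$, because $\phi\ge0$ by \eqref{eq:A_min0}.
\item[(b)] Continuity. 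The map $\mu\mapsto\sl_\mu(u)$ is a supremum of affine functions of $\mu$, hence convex. By (a) it is finite on $(0,\infty)$, so it is continuous there.
\end{itemize}
Consequently $\sl_\mu(u)<\alpha+\epsi$ for rational $\mu<\mu_0$ close enough to $\mu_0$, and we conclude as in Step 3.

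The degenerate case $\mu_0=0$ makes the index set $\{\mu\in\Qz_+:\mu\le\mu_0\}$ empty. I would handle it by reading the constraint as allowing $\mu=\mu_0$ (or $\mu\in\Qz_+\cup\{\mu_0\}$); in that case $\sl=\sl_{\mu_0}$ and \eqref{eq:emu} applies directly.
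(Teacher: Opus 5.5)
Your proof is correct. For $\mu_0=\infty$, and for the implication $\tilde E(\alpha,u)=0\Rightarrow\sl(u)\le\alpha$, it is the paper's argument: in the first case you find a rational $\mu$ with $\sl_\mu(u)\le\alpha+\epsi$ via \eqref{eq:mu2} and \eqref{eq:emu}, and for the converse you use $\sl\le\sl_\mu$ from \eqref{eq:mu1}. Your Step 1 makes explicit the use of $h(0+)>0$ that the paper leaves tacit.

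You genuinely differ in the case $\mu_0<\infty$. The paper only says it follows by ``easily adapting'' Proposition~\ref{prop:char}, i.e.\ via $\sl=\sl_{\mu_0}$ and \eqref{eq:max}. But the inner infimum in \eqref{eq:nuovo} runs only over rational $\mu\le\mu_0$, so that reduction works verbatim only when $\mu_0\in\Qz_+$. Your Step 4 is what is actually needed for irrational $\mu_0$. It uses finiteness of $\sl_\mu(u)$ for $u\in D(\sl)$ (from $\phi\ge0$), convexity in $\mu$ and hence continuity at $\mu_0>0$. The slack $\epsi$ then gives $E_\mu(\alpha+\epsi,u)=0$ \emph{exactly} for rational $\mu<\mu_0$ close to $\mu_0$. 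Mere continuity of $\mu\mapsto E_\mu(\alpha+\epsi,u)$ would not suffice, since $h(0+)>0$ forces an exact zero.

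Your remark on $\mu_0=0$ is also right. With $\Qz_+=\Qz\cap(0,\infty)$ the index set is empty, so $\tilde E\equiv\infty$. The forward implication then fails as literally stated, e.g.\ for geodesically convex $\phi$, where $\sl=\gl$ and hence $\mu_0=0$. Some amendment of the definition is needed. Admitting $\mu=\mu_0$ as you suggest works. So does dropping the constraint $\mu\le\mu_0$ altogether; then any rational $\mu\ge\mu_0$ gives $\sl_\mu=\sl$, and your Step 4 becomes unnecessary.
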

 \begin{proof}
    The case $\mu_0 <\infty$  can be checked by easily adapting
   the proof of Proposition~\ref{prop:char}.

   Set $\mu_0 = \infty$. Assume $\sl(u)\leq \alpha$
   and fix any $\epsi \in \Qz_+$. As $\sl_\mu(u) \to \sl(u)$ from
   \eqref{eq:mu2}, there exists $\mu\in \Qz_+$ such that $\sl_\mu(u) \leq
   \alpha+\epsi$. Then, \eqref{eq:emu} implies that
   $E_\mu(\alpha+\epsi,u) =0$, so that $\inf_{ \mu \in
     \Qz_+} h(E_\mu(\alpha+\epsi,u))  =0$ and $\tilde E(\alpha,u)=0$ follows
   as $\epsi $ is arbitrary.

   Assume  now that $\tilde E(\alpha,u)=0$. Then, for all $\epsi\in \Qz_+$
   there exists  $\mu\in \Qz_+$ such that $E_\mu(\alpha+\epsi,u)
   =0$. This implies that
   $\sl_\mu(u) \leq
   \alpha+\epsi$ by \eqref{eq:emu}. As $\sl(u) \leq \sl_\mu(u)$ by
   \eqref{eq:mu1}, we deduce that $\sl(u) \leq \alpha$ by taking
   $\epsi \to 0$.
 \end{proof}

Let us now introduce the time-continuous functionals, based on 
 the  functional $\tilde E$.
Note that for all $\mu,\, \epsi>0$ the map $(\alpha,u)\mapsto
E_\mu(\alpha+\epsi,u)$ is Borel, which implies that $\tilde E$ is Borel, as
well. This allows to define the functionals $\tilde \EE, \, \tilde \GG, \, \tilde \HH : L^p(0,T) \times
AC^p([0,T];U) \to [0,\infty]$ as
\begin{align*}
  &\tilde \EE(\alpha,u)=\int_0^T \tilde E(\alpha(t),u(t))\, \d t, \\
   & \tilde \GG(\alpha,u) =
     \left\{
     \begin{array}{ll}
       \FF(\alpha,u) \quad&\text{if} \ \tilde\EE (\alpha,u)=0,\\[2mm]
       \infty&\text{otherwise},
     \end{array}
               \right. \\[1mm]
  &\tilde \HH(\alpha,u) = (\FF(\alpha,u))^+ + \tilde \EE(\alpha,u).
\end{align*}

Using these functionals, we can again state a
Brezis--Ekeland--Nayroles principle as in Theorem \ref{prop:BEN}.

 \begin{proposition}[Null-minimization 2]\label{prop:BEN2}
    The following are equivalent:
   \begin{itemize}
   \item[\rm (i)]\quad $u$ is a curve of maximal slope,\vspace{0.6mm}
     \item[\rm (ii)]\quad  $
  \tilde \GG(\alpha,u)=\min_{K(u^0)} \tilde \GG=0$ for some $\alpha \in
  L^{p'}(0,T)$,
  \item[\rm (iii)] \quad  $
  \tilde \HH(\alpha,u)=\min_{K(u^0)} \tilde \HH=0$ for some $\alpha \in
  L^{p'}(0,T)$.
   \end{itemize}  
 \end{proposition}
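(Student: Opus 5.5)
The plan is to repeat the proof of Theorem \ref{prop:BEN} almost verbatim. The only change is that \eqref{eq:max} is replaced by the new characterization \eqref{eq:char2} of Proposition \ref{prop:char2}, and the strong upper gradient property of $\sl$ now comes from \eqref{eq:A200} instead of \eqref{eq:A20}. Before the equivalences I would record the preliminary fact that $\tilde\GG$ and $\tilde\HH$ are nonnegative on $K(u^0)$. For $\tilde\HH$ this is immediate, since $\tilde E\ge 0$ (it takes values in $[0,\infty]$ because $h\ge0$). For $\tilde\GG$, suppose $\tilde\EE(\alpha,u)=0$. Then $\tilde E(\alpha,u)=0$ almost everywhere, so $\sl(u)\le\alpha$ almost everywhere by \eqref{eq:char2}. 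Hence $\FF(\sl(u),u)\le\FF(\alpha,u)$ by the monotonicity of $\psi^*$. The chain rule for the strong upper gradient $\sl$, combined with Fenchel's inequality, gives $\FF(\sl(u),u)\ge0$ exactly as in the computation before Theorem \ref{prop:BEN}.

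For (i)$\Rightarrow$(ii),(iii), let $u$ be a curve of maximal slope and take $\alpha=\sl(u)$. I must check that $\alpha\in L^{p'}(0,T)$. This follows from \eqref{eq:curve} at $t=T$, since $\int_0^T\psi^*(\sl(u))\,\dr\le\phi(u^0)-\min\phi<\infty$, and the lower bound in \eqref{eq:A4} then gives $L^{p'}$-integrability. Thus $(\alpha,u)\in K(u^0)$ and $\FF(\alpha,u)=0$. Moreover $\sl(u)\le\alpha$ holds trivially pointwise wherever $\sl(u)<\infty$, which is almost everywhere, so \eqref{eq:char2} yields $\tilde E(\alpha,u)=0$ almost everywhere and $\tilde\EE(\alpha,u)=0$. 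Therefore $\tilde\GG(\alpha,u)=\tilde\HH(\alpha,u)=0$, and by the nonnegativity fact this value is the minimum.

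For (ii)$\Rightarrow$(i) and (iii)$\Rightarrow$(i), in both cases we obtain $\tilde\EE(\alpha,u)=0$ and $\FF(\alpha,u)\le0$. By \eqref{eq:char2}, $\sl(u)\le\alpha$ almost everywhere, so $\sl(u)\in L^{p'}(0,T)$ and $\FF(\sl(u),u)\le0$. Using \eqref{eq:A200}, $\phi\circ u\in W^{1,1}(0,T)$, and the two-sided estimates \eqref{eq:q1}--\eqref{eq:q2} go through unchanged, since they rely only on the strong upper gradient property and Fenchel's inequality. These estimates give \eqref{eq:curve} for every $t\in[0,T]$.

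The main point of care is measurability. We need $t\mapsto\tilde E(\alpha(t),u(t))$ to be measurable so that $\tilde\EE$ is well defined, and this is provided by the Borel property of $\tilde E$ noted above. We also need to make sure that no step used lower semicontinuity of $\sl$ or the reduced-functional structure of $E$. Neither is needed here, because the argument only passes through the pointwise equivalence \eqref{eq:char2}. The role of $\mu_0=\infty$ is entirely absorbed into Proposition \ref{prop:char2}, so no further obstacle is expected.
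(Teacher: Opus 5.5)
Your proposal is correct and takes the same route as the paper, which omits the proof, saying it coincides with that of Theorem \ref{prop:BEN} once \eqref{eq:max} is replaced by \eqref{eq:char2}; the strong upper gradient property of $\sl$ now comes from \eqref{eq:A200}. Your extra checks fill in details the paper leaves implicit: nonnegativity of $\tilde\GG$ on $K(u^0)$, $\sl(u)\in L^{p'}(0,T)$ via \eqref{eq:A4}, and the Borel measurability of $\tilde E$.
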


 We omit the proof of Proposition  \ref{prop:BEN2},  as it coincides with  that  of Theorem
 \ref{prop:BEN}, up to using the characterization \eqref{eq:char2} instead of
 \eqref{eq:max}.

Before closing this section, let us point out that, in case
$\mu_0<\infty$, the above functionals  play the same role as  the ones of
the previous sections. More  precisely, if $\mu_0<\infty$, one has that the functionals $\tilde E$ and $E$ share the same sublevel
$0$, which in particular implies that $\tilde \GG = \GG$. This is an immediate consequence of
Proposition  \ref{prop:char2} and  of the characterization \eqref{eq:max}.  

\begin{proposition}[Case $\mu_0 <\infty$]
   If $\mu_0 <\infty$  one has
  that
  $$\tilde E(\alpha,u)  = 0 \ \Leftrightarrow \  E(\alpha,u)=0.$$
\end{proposition}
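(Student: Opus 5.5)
The plan is to route both conditions through the single inequality $\sl(u)\le\alpha$, using Proposition \ref{prop:char2} for $\tilde E$ and \eqref{eq:max} for $E$. The only point needing care is that, for $\mu_0<\infty$, $E$ is built from $\ove E=\ove E_{\mu_0}$, that is, $E=E_{\mu_0}$. Note also that Proposition \ref{prop:char2} is stated with the infimum restricted to $\mu\le\mu_0$ in \eqref{eq:nuovo}; its proof in the case $\mu_0<\infty$ was only sketched as an adaptation of Proposition \ref{prop:char}. I would therefore recheck that case directly rather than simply quote it.

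\textbf{Step 1} ($E=0\Rightarrow\tilde E=0$). By \eqref{eq:max}, $E(\alpha,u)=0$ gives $\sl(u)\le\alpha$, and in particular $u\in D(\phi)$. Fix $\epsi\in\Qz_+$. Since $\sl=\sl_{\mu}$ for every $\mu\ge\mu_0$, we have $\sl_{\mu_0}(u)\le\alpha<\alpha+\epsi$. By \eqref{eq:emu}, $E_{\mu_0}(\alpha+\epsi,u)=0$. If $\mu_0\in\Qz_+$, choosing $\mu=\mu_0$ makes the infimum in \eqref{eq:nuovo} equal to $h(0)=0$. In general $\mu_0$ may be irrational or zero, and then the admissible rational $\mu$ satisfy $\mu<\mu_0$. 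For such $\mu$ we still use \eqref{eq:mu2}, or the continuity of $\mu\mapsto\sl_\mu(u)$ near $\mu_0$, to find $\mu\in\Qz_+$ with $\sl_\mu(u)\le\alpha+\epsi$, exactly as in the proof of Proposition \ref{prop:char2}. Then $E_\mu(\alpha+\epsi,u)=0$ and the infimum vanishes. Taking the supremum over $\epsi$ gives $\tilde E(\alpha,u)=0$.

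If the constraint $\mu\le\mu_0$ makes this fail (for example when $\mu_0=0$), I would fall back on the convention that the infimum ranges over all $\mu\in\Qz_+$. Indeed, \eqref{eq:mu0} and \eqref{eq:mu1} give $\sl_\mu(u)=\sl(u)\le\alpha$ for every rational $\mu\ge\mu_0$, so $E_\mu(\alpha+\epsi,u)=0$ directly.

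\textbf{Step 2} ($\tilde E=0\Rightarrow E=0$). Suppose $\tilde E(\alpha,u)=0$. Since $h(0+)>0$, $h$ is nondecreasing, and $E_\mu$ takes values in $[0,\infty]$, for each $\epsi$ there is an admissible $\mu$ with $E_\mu(\alpha+\epsi,u)=0$. By \eqref{eq:emu} this means $\sl_\mu(u)\le\alpha+\epsi$, and \eqref{eq:mu1} then gives $\sl(u)\le\alpha+\epsi$. Letting $\epsi\to0$ yields $\sl(u)\le\alpha$, and \eqref{eq:max} gives $E(\alpha,u)=0$.

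The main obstacle is Step 1, specifically that the infimum in \eqref{eq:nuovo} runs over rationals and is capped by $\mu_0$. The fact to check first is that the infimum over rational $\mu$ vanishes whenever $E_\mu(\alpha+\epsi,u)=0$ holds along rationals approaching the relevant range. The available tools for this are the strict margin $\epsi$ together with the monotonicity and continuity of $\mu\mapsto E_\mu(\alpha+\epsi,u)$.
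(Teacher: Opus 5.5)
Your route is the paper's. Its proof is the one-line observation that both $\tilde E(\alpha,u)=0$ and $E(\alpha,u)=0$ are equivalent to $\sl(u)\le\alpha$, by Proposition \ref{prop:char2} and \eqref{eq:max}. Your Step~2 and the case $\mu_0\in\Qz_+$ of Step~1 carry this out correctly. You are also right that the cap $\mu\le\mu_0$ in \eqref{eq:nuovo} is the delicate point, and the paper does not address it. However, your handling of that cap has a gap.

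For $\mu_0\in(0,\infty)\setminus\Qz$ you appeal to \eqref{eq:mu2}. That result concerns $\mu\to\infty$ and is useless when the admissible $\mu$ are bounded by $\mu_0$, so ``exactly as in the proof of Proposition \ref{prop:char2}'' does not apply. The continuity of $\mu\mapsto E_\mu(\alpha+\epsi,u)$ suggested in your last paragraph is not enough either: it only gives $E_\mu(\alpha+\epsi,u)\to 0$. Since $h(0+)>0$, you need \emph{exact} vanishing, i.e.\ $\sl_\mu(u)\le\alpha+\epsi$ by \eqref{eq:emu}. So what you need is $\limsup_{\mu\uparrow\mu_0}\sl_\mu(u)\le\sl(u)$, which is not in the paper and must be proved. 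It follows from $\phi\ge0$ as follows.
\begin{itemize}
\item[$\bullet$] Fix $R>0$ with $\phi(u)\le R(\alpha+\epsi)$.
\item[$\bullet$] For $d(u,v)\ge R$, the quantity $\frac{(\phi(u)-\phi(v))^+}{d(u,v)}-\mu d^{q-1}(u,v)$ is at most $\phi(u)/R\le\alpha+\epsi$.
\item[$\bullet$] For $0<d(u,v)<R$, write $-\mu d^{q-1}=-\mu_0 d^{q-1}+(\mu_0-\mu)d^{q-1}$. The same quantity is then at most $\sl_{\mu_0}(u)+(\mu_0-\mu)R^{q-1}\le\alpha+(\mu_0-\mu)R^{q-1}$.
\end{itemize}
Hence every rational $\mu\in(0,\mu_0)$ with $(\mu_0-\mu)R^{q-1}\le\epsi$ satisfies $\sl_\mu(u)\le\alpha+\epsi$, and the infimum in \eqref{eq:nuovo} vanishes.

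The case $\mu_0=0$ is not one where the argument merely ``may fail'': the statement itself fails under the literal definition. The index set $\{\mu\in\Qz_+:\mu\le 0\}$ is empty, so the infimum in \eqref{eq:nuovo} is $+\infty$ and $\tilde E\equiv+\infty$. Meanwhile $E(\alpha,u_*)=\max_v\{-\alpha d(u_*,v)-\phi(v)\}=0$ at any minimizer $u_*$ of $\phi$. This case does occur, e.g.\ for $(\lambda,q)$-convex $\phi$ with $\lambda\ge 0$, or $\phi(u)=u^2/2$ on $\Rz$, where $\sl=\gl$. So no proof can cover $\mu_0=0$, and the paper's one-line proof, together with Proposition \ref{prop:char2}, silently overlooks this. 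Your fallback is therefore a modification of the definition of $\tilde E$, and you should present it as such, not as a step of the proof. With the infimum taken over all of $\Qz_+$, Step~1 is immediate for every $\mu_0<\infty$, irrational included, because any rational $\mu\ge\mu_0$ has $\sl_\mu=\sl$ by \eqref{eq:mu0}--\eqref{eq:mu1}. The estimate above is needed only if you keep the cap.
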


\textbf{Acknowledgement.}
 This research is partially supported by the Austrian Science Fund (FWF) project
 10.55776/PAT1408325.
 For open access purposes, the authors have applied a CC BY public
copyright license to any author accepted manuscript version arising
from this submission.

\textbf{Competing Interests.}
The authors have no conflicts of interest to declare that are relevant to the content of this chapter.

\textbf{Tool and computational resource disclosure.}
The authors used a LLM as a language support tool. Moreover, the proof of the limit \eqref{eq:mu2} has been inspired by a LLM comment.



\begin{thebibliography}{99}

\bibitem{Ambrosio95}
 L.~Ambrosio. Minimizing movements. {\it Rend. Accad. Naz. Sci. XL Mem. Mat. Appl. (5)}, 19 (1995), 191--246.

 \bibitem{Ambrosio08}
 L.~Ambrosio, N.~Gigli, G.~Savar{\'e}. \emph{Gradient flows in
   metric spaces and in the space of probability measures}, second ed., 
 	Birkh\"auser Verlag, Basel, 2008.
   
\bibitem{cuore3}
P.-C.~Aubin--Frankowski, G.~E.~Sodini, U.~Stefanelli. The
Brezis--Ekeland--Nayroles principle in metric spaces:
minimizing-movement schemes. In preparation (2026).
 
\bibitem{Barbu12}
V.~Barbu. Optimal control approach to nonlinear diffusion equations
driven by Wiener noise. {\it J. Optim. Theory Appl.} 153 (2012),
no. 1, 1--26.

\bibitem{Barbu-Kunisch95}
 V.~Barbu, K.~Kunisch.  Identification of nonlinear parabolic
  equations. {\it Control Theory Adv. Tech.} 10 (1995), no.~4, part~5, 1959--1980.

\bibitem{Barbu}
V.~Barbu, M.~R\"ockner. Variational solutions to nonlinear stochastic
differential equations in Hilbert spaces. {\it Stoch. Partial
  Differ. Equ. Anal. Comput.} 6 (2018), no. 3, 500--524.

\bibitem{Biot55}
M.~A. Biot. {Variational principles in irreversible thermodynamics
  with application to viscoelasticity}. {\it Phys. Rev. (2)}, 97
(1955), 1463--1469.

\bibitem{Bog07}
V.~I. Bogachev, 
\newblock {\em Measure theory. Vol. I, II}.
\newblock Springer, Berlin, 2007.

\bibitem{Boroushaki}
 S.~Boroushaki, N.~ Ghoussoub. A self-dual variational approach to
 stochastic partial differential equations. {\it J. Funct. Anal.} 276
 (2019), no. 4, 1201--1243.

\bibitem{Brezis-Ekeland76b}
H.~Brezis, I.~Ekeland.
\newblock Un principe variationnel associ\'e \`a certaines \'equations
  paraboliques. {L}e cas d\'ependant du temps.
\newblock {\em C. R. Acad. Sci. Paris S\'er. A-B}, 282 (1976), 
no.~20, Ai,  A1197--A1198.

\bibitem{Brezis-Ekeland76}
H.~Brezis, I.~Ekeland.
\newblock Un principe variationnel associ\'e \`a certaines \'equations
  paraboliques. {L}e cas ind\'ependant du temps.
\newblock {\em C. R. Acad. Sci. Paris S\'er. A-B}, 282 (1976),  
no.~17, Aii,  A971--A974.

\bibitem{Buliga}
M.~Buliga, G.~de Saxc\'e. A symplectic Brezis-Ekeland-Nayroles
principle. {\it  Math. Mech. Solids}, 22 (2017), no. 6, 1288--1302.

\bibitem{Cao}
X.~Cao, A.~Oueslati, N.~Shirafkan, F.~Bamer, B.~Markert, G.~de
Saxc\'e. A non-incremental numerical method for dynamic elastoplastic
problems by the symplectic Brezis--Ekeland--Nayroles principle. {\it Comput. Methods Appl. Mech. Engrg.} 384 (2021), 113908, 25 pp.

\bibitem{Carini}
L.~Carini, M.~Jensen, R.~N\"urnberg. Deep learning for gradient flows
using the Brezis-Ekeland principle. {\it Arch. Math. (Brno)}, 59
(2023), no. 3, 249--261.

\bibitem{Cheeger99}
J.~Cheeger.
\newblock Differentiability of {L}ipschitz functions on metric measure spaces.
\newblock {\em Geom. Funct. Anal.} 9 (1999),  no.~4,  428--517.

\bibitem{DeGiorgi80}
E.~De Giorgi, A.~Marino, M.~Tosques.
\newblock Problems of evolution in metric spaces and maximal decreasing curve.
\newblock {\em Atti Accad. Naz. Lincei Rend. Cl. Sci. Fis. Mat. Natur. (8)},
  68 (1980),  no.~3,  180--187.

\bibitem{Dondl19}
  P.~Dondl, T.~Frenzel, A.~Mielke. A gradient system with a wiggly
  energy and relaxed EDP-convergence. {\it ESAIM Control
    Optim. Calc. Var.} 25 (2019), Paper No. 68, 45 pp.

\bibitem{Frenzel}
 T.~Frenzel, M.~Liero. Effective diffusion in thin structures via
 generalized gradient systems and EDP-convergence. {\it Discrete
   Contin. Dyn. Syst. Ser. S}, 14 (2021), no.~1, 395--425.

\bibitem{Ghoussoub08}
N.~Ghoussoub. {\it Self-dual partial differential systems and their variational principles}. Springer Monographs in Mathematics. Springer, New York, 2009. 
 
\bibitem{Ghoussoub-McCann04}
{N.~Ghoussoub, R.~J. McCann}. {A least action principle for steepest
  descent in a non-convex landscape}. In {\it Partial differential equations and
  inverse problems}, vol.~362 of {\it Contemp. Math.},
Amer. Math. Soc., Providence, RI, 2004, 177--187.

\bibitem{Ghoussoub2}
N.~Ghoussoub, A.~Moameni. Selfdual variational principles for periodic
solutions of Hamiltonian and other dynamical systems. {\it
  Comm. Partial Differential Equations}, 32 (2007), no. 4-6,
771--795.

\bibitem{Ghoussoub}
N.~Ghoussoub, A.~Moameni. Hamiltonian systems of PDEs with selfdual
boundary conditions. {\it Calc. Var. Partial Differential Equations},
36 (2009), no. 1, 85--118.

\bibitem{Ghoussoub-Tzou04}
{N.~Ghoussoub, L.~Tzou}. {A variational principle for gradient
  flows}. {\it Math. Ann.} 330 (2004), no.~3, 519--549.

\bibitem{Ghoussoub3}
N.~Ghoussoub, L.~Tzou. Iterations of anti-selfdual Lagrangians and
applications to Hamiltonian systems and multiparameter gradient
flows. {\it Calc. Var. Partial Differential Equations}, 26 (2006),
no. 4, 511–534.

\bibitem{Gurtin63}
{M.~E. Gurtin}. {Variational principles in the linear theory of
  viscoelasticity}. {\it Arch. Rational Mech. Anal.} 13 (1963), 179--191.

\bibitem{Gurtin64b}
{M.~E. Gurtin}. {Variational
  principles for linear elastodynamics}. {\it  Arch. Rational Mech. Anal.} 16 (1964),
  34--50.

\bibitem{Hlavacek69}
{I.~Hlav{\'a}{\v{c}}ek}. {Variational principles for parabolic
  equations}. {\it Apl. Mat.} 14 (1969), 278--297.

\bibitem{JKO}
 R.~Jordan, D.~Kinderlehrer, and F.~Otto. \newblock The variational formulation of the Fokker-Planck equation.
 \newblock  {\it SIAM J. Math. Anal.}   29 (1998),  no.~1,   1--17.

\bibitem{Lemaire96}
{B.~Lemaire}.  An asymptotical variational principle associated with
  the steepest descent method for a convex function. {\it J. Convex Anal.} 3
  (1996), no.~1, 63--70.

\bibitem{Mabrouk01}
M.~Mabrouk. Un principe
  variationnel pour une \'equation non lin\'eaire du second ordre en
  temps. {\it C.
  R. Acad. Sci. Paris S\'er. I Math.} 332 (2001), no.~4,  381--386.

\bibitem{Mabrouk03}
M.~Mabrouk. A variational
  principle for a nonlinear differential equation of second order. {\it Adv. in
  Appl. Math.} 31 (2003), no.~2, 388-419.

\bibitem{Marinoschi}
G.~Marinoschi. A duality approach to nonlinear diffusion
equations. {\it Set-Valued Var. Anal.} 22 (2014), no. 4, 783--807.

\bibitem{Mielke20}
  A.~Mielke, A.~Stephan. Coarse-graining via EDP-convergence for
  linear fast-slow reaction systems. {\it Math. Models 
  Meth. Appl. Sci.} 30  (2020), no.~9, 1765--1807.
 
\bibitem{Mielke21}
A.~Mielke, A.~Montefusco, M.~Peletier. Exploring families of
energy-dissipation landscapes via tilting: three types of EDP
convergence. {\it Contin. Mech. Thermodyn.} 33 (2021), no.~3,
611--637.

\bibitem{Muratori-Savare}
M.~Muratori, G.~ Savar\'e. Gradient flows and evolution variational
inequalities in metric spaces. I: Structural properties. {\it
  J. Funct. Anal.} 278 (2020),  no.~4,  108347, 67 pp.

\bibitem{Nayroles76}
B.~Nayroles.
\newblock Deux th\'eor\`emes de minimum pour certains syst\`emes dissipatifs.
\newblock {\em C. R. Acad. Sci. Paris S\'er. A-B}, 282 (1976), 
no.~17, Aiv,  A1035--A1038.

\bibitem{Nayroles76b}
B.~Nayroles.
\newblock Un th\'eor\`eme de minimum pour certains syst\`emes dissipatifs.
  {V}ariante hilbertienne.
\newblock {\em Travaux S\'em. Anal. Convexe}, 6 (1976), Exp. 
no.~2, 22 pp. 

\bibitem{Ohta-Zhao}
S.~  Ohta, W.~ Zhao. Gradient flows in asymmetric metric spaces and
applications. {\it Ann. Sc. Norm. Super. Pisa Cl. Sci. (5)}, 26 (2025), no. 2, 919--991.

\bibitem{Otto01}
F.~Otto. \newblock The geometry of dissipative evolution equations:
the porous medium equation.  \newblock {\it Comm. Partial
  Differential Equations},  26 (2001),  no.~1-2,   101--174.

\bibitem{Poliakovsky2}
 A.~Poliakovsky. On a variational approach to the method of
 vanishing viscosity for conservation laws. {\it Adv. Math. Sci. Appl.} 18
 (2008), no. 2, 429--451.

\bibitem{Poliakovsky}
 A.~Poliakovsky. Variational resolution for some general classes of
 nonlinear evolutions. Part I. {\it Asymptot. Anal.} 85 (2013),
 no. 1--2, 29--74.

\bibitem{portinale}
 L. Portinale, U. Stefanelli.
 Penalization via global functionals of optimal-control problems for dissipative evolution.
 {\it Adv. Math. Sci. Appl.} 28 (2019), no.~2, 425--447.

\bibitem{Rossi-Savare}
 R.~Rossi, G.~Savar\'e.
Tightness, integral equicontinuity and compactness for evolution
problems in Banach spaces. 
{\it Ann. Sc. Norm. Super. Pisa Cl. Sci. (5)}, 2 (2003), no. 2, 395--431. 

\bibitem{Rossi11}
 R.~Rossi, G.~Savar\'e, A.~Segatti, U.~Stefanelli.  A variational principle for gradient flows
in metric spaces. {\it C. R. Math. Acad. Sci. Paris}, 349 (2011), no. 23--24,
1225--1228.

\bibitem{Rossi19}
 R.~Rossi, G.~Savar\'e, A.~Segatti, U.~Stefanelli.  Weighted
 energy-dissipation principle for gradient flows in metric
 spaces. {\it J. Math. Pures Appl. (9)}, 127 (2019), 1--66. 

\bibitem{rsss}
R.~Rossi, A.~Segatti, U.~Stefanelli.
Global attractors for gradient flows in metric spaces.
{\it J. Math. Pures Appl.} 95 (2011), 204--244.

\bibitem{Sandier04}
E.~Sandier, S.~Serfaty. Gamma-convergence of gradient flows with
  applications to {G}inzburg-{L}andau. {\it  Comm. Pure Appl. Math.} 57 (2004), no.~12,
  1627--1672.
  
\bibitem{Serfaty11}
S.~Serfaty. Gamma-convergence of gradient flows on Hilbert and metric
spaces and applications. {\it Discrete Contin. Dyn. Syst.} 31 (2011), no.~4,
1427--1451.

\bibitem{Shimoyama}
  S.~Shimoyama. Transformation of $p$-gradient flows to $p'$-gradient
  flows in metric spaces. {\it Anal. Geom. Metr. Spaces}, 13 (2025), no. 1,
  Paper No. 20250032, 24 pp.

\bibitem{be}
  U.~Stefanelli. The Brezis--Ekeland principle for doubly nonlinear
  equations. {\it SIAM J. Control Optim.} 47 (2008), no. 3,
  1615--1642.
 
\bibitem{plas}
  U.~Stefanelli.  A variational principle for hardening
  elastoplasticity. {\it SIAM J. Math. Anal.} 40 (2008), no. 2, 623--652.
  
\bibitem{be2}
U.~Stefanelli. The discrete Brezis--Ekeland principle. {\it J. Convex Anal.} 16 (2009), no. 1, 71--87. 

  \bibitem{Stefanelli25}
U.~Stefanelli. The weighted inertia-energy-dissipation principle. {\it
  Math. Models Methods Appl. Sci.} 35 (2025), no. 2, 223--282.

\bibitem{Visintin01}
{A.~Visintin}. {A new approach to
  evolution}. {\it C. R. Acad. Sci. Paris S\'er. I Math.} 332 (2001), no.~3, 233--238.

\bibitem{Visintin11}
A.~ Visintin. Structural stability of doubly-nonlinear flows. {\it Boll. Unione Mat. Ital. (9)}, 4 (2011), no. 3, 363--391.

\bibitem{Visintin13}
A.~Visintin. Variational formulation and structural stability
of monotone equations. {\it Calc. Var. Partial Differential
  Equations}, 47 (2013), no. 1-2, 273--317.

\bibitem{Visintin13b}
A.~Visintin.  Structural stability of rate-independent nonpotential
flows. {\it Discrete Contin. Dyn. Syst. Ser. S}, 6 (2013), no. 1, 257--275.

\bibitem{Visintin16}
A.~Visintin. On Fitzpatrick's theory and stability of flows. {\it Atti
  Accad. Naz. Lincei Rend. Lincei Mat. Appl.} 27 (2016), no. 2,
151--180.

\bibitem{Visintin21}
A.~Visintin. Structural compactness and stability of doubly nonlinear
flows. {\it Pure Appl. Funct. Anal.} 6 (2021), no. 2, 417--432.

\bibitem{Visintin22}
 A.~Visintin. $\Gamma$-compactness and $\Gamma$-stability of maximal monotone
  flows. {\it J. Math. Anal. Appl.} 506 (2022), no. 1, Paper No. 125602, 29
  pp.

\end{thebibliography}
\end{document}